\theoremstyle{plain} 
\newtheorem{tw}{Theorem}
\newtheorem{lemma}{Lemma}
\newtheorem{corollary}{Corollary}
\theoremstyle{definition} 
\newtheorem{definition}{Definition}
\newtheorem{example}{Example}
\newtheorem{counterexample}{Counterexample}
\newcommand\mR{{\mathbb R}}
\newcommand\mN{{\mathbb N}}
\newcommand\cA{{\mathcal A}}
\newcommand\fS{{\textfrak{S}}}
\newcommand\bI{\mathbf{I}}
\newcommand{\rS}{\mathrm{S}}
\newcommand{\rL}{\mathrm{L}}
\newcommand{\rM}{\mathrm{M}}
\newcommand{\sP}{\mathsf{P}}
\newcommand{\cP}{\mathcal{P}}
\newcommand{\sX}{\mathsf{X}}
\newcommand{\sY}{\mathsf{Y}}
\renewcommand{\c}{\circ}
\newcommand\md{{\,\mathrm{d}}}
\newcommand{\sint}{(S)\int}
\newcommand{\nint}{(N)\int}
\def\Xint#1{\mathchoice
 {\XXint\displaystyle\textstyle{#1}}%
 {\XXint\textstyle\scriptstyle{#1}}%
 {\XXint\scriptstyle\scriptscriptstyle{#1}}%
 {\XXint\scriptscriptstyle\scriptscriptstyle{#1}}%
 \!\int}
 \def\XXint#1#2#3{{\setbox0=\hbox{$#1{#2#3}{\int}$}
 \vcenter{\hbox{$#2#3$}}\kern-.5\wd0}}
 \def\dashint{\Xint-}
\newcommand\calka[3][\circ]{\int_{#1,#2} #3 \md\mu } 
\newcommand\dcalka[3][\oplus]{\dashint_{#1,#2} #3 \md\mu } 
\newcommand\dolsug[2]{(S)\,\dashint_{#1}#2 \md\mu }
\renewcommand\ge{\geqslant}
\renewcommand\le{\leqslant}
\newcommand{\mI}[1]{\mathbbm{1}_{#1}}
\newcommand\ca{{\,\circ_1\,}}
\newcommand\cb{{\,\circ_2\,}}
\newcommand\cc{{\,\circ_3\,}}
\newcommand\ch{{\,\circ_h\,}}
\newcommand\ci{{\,\circ_i\,}}
\newcommand\lo{{\,\lozenge\,}}
\newcommand\trd{\,\triangledown\,}
\newcommand\trdd{\triangledown}
\newcommand\trr{\,\triangle\,}
\newcommand\caa{{\circ_1}}
\newcommand\cbb{{\circ_2}}
\newcommand\ccc{{\circ_3}}
\newcommand\cii{{\circ_i}}
\newcommand\loo{{\lozenge}}
\newcommand\st{\star}
\renewcommand\ge{\geqslant}
\renewcommand\le{\leqslant}
\title{On the Minkowski-H\"{o}lder type inequalities for generalized Sugeno integrals with an application}
\author{Michał Boczek\footnote{Corresponding author. E-mail adress: 800401@edu.p.lodz.pl; tel.: +48 42 6313859; fax.: +48 42 6363114. 
}, Marek Kaluszka
\\ 
{\emph{
\small{Institute of Mathematics, Lodz University of Technology, 90-924 Lodz, Poland}}}}
\date{}
\begin{document}
\maketitle

\begin{abstract}
In this paper, we use a~new method to obtain the necessary and sufficient condition guaranteeing the validity of the Minkowski-H\"{o}lder type inequality 
for the generalized upper Sugeno integral in the case of functions belonging to a~wider class than the comonotone functions. As a~by-product, we show that the Minkowski type inequality for seminormed fuzzy integral presented by Daraby $\cite{daraby4}$ is not true. Next, we study 
the Minkowski-H\"{o}lder inequality for the lower Sugeno integral and 
the class of $\mu$-subadditive functions introduced in \cite{boczek1}.
The results are applied to derive new metrics on the space of measurable functions in the setting of nonadditive
measure theory.   
We also give a~partial answer to the open problem $2.22$ posed in  $\cite{hutnik2}.$
\end{abstract}

{\it Keywords: }{Seminormed fuzzy integral; Semicopula; Monotone measure; Minkowski's inequality; H\"older's inequality; convergence in mean.}

\section{Introduction} The concepts of fuzzy measures and the Sugeno integral were introduced by Sugeno in $\cite{sugeno1}$ as a~tool for modeling nondeterministic problems.
The study of inequalities for the Sugeno integral was initiated by Rom\'an-Flores et al. $\cite{flores6}.$ Since then, the fuzzy integral counterparts of several classical inequalities, including Chebyshev's, Minkowski's and H\"older's inequalities have been given by Agahi et al. $\cite{agahi4},$ 
Klement et al.  $\cite{klement3}$,  Ouyang et al. $\cite{ouyang1, ouyang6}$, Wu et al. $\cite{lwu}$
and many other researchers. 
Most of them deal with  comonotone functions which highly limit the range of potential applications in probability, statistics, decision theory,  risk theory and others. 

Since many classical inequalities are free of the comonotonicity assumption, Agahi and Mesiar $\cite{agahi21}$  asked whether one could omit it. 
They gave a~version of the Cauchy–Schwarz inequality
without the comonotonicity condition for two classes
of Choquet-like integrals.  
In $\cite{boczek1}$  the Chebyshev type inequalities were provided for positively dependent functions which form a~wider class than the comonotone functions.  The aim of this paper is to present another 
inequalities for nonadditive integrals without the comonotonicity condtion.

The paper is organized as follows. In Section 2, we introduce a~new concept, called $\star$-associativity, which extends the notion of  comonotonicity. Next,  we obtain the necessary and sufficient conditions ensuring that the Minkowski-H\"older type inequality  holds for the generalized upper Sugeno integral and $\star$-associative functions. 
We give a~counterexample showing  that the Minkowski type inequality for seminormed fuzzy integral presented in $\cite{daraby4}$, Theorem $3.1,$ is false. The sufficient conditions for subadditivity of some 
functionals based on   the upper Sugeno integral are also provided. 
Section $3$ presents the Minkowski-H\"older type inequality for the generalized lower Sugeno integral and $\mu$-subadditive functions. The necessary and sufficient condition for subadditivity of the Sugeno integral
with respect to a subadditive measure is given. Finally, in Section $4$ we propose  new metrics on the space of
measurable functions when the involved measure is  monotone. We also give a~partial answer to the open problem posed by Borzová-Molnárová et al. $\cite{hutnik2}.$

\section{Inequalities for generalized upper Sugeno integral}
First, we introduce some basic definitions and properties.
Let $(X,\cA)$ be a~measurable space, where $\cA$ is a~$\sigma$-algebra of subsets of a~nonempty set $X.$ 
A~{\it monotone measure} on $\cA$  is a~nondecreasing set function 
$\mu\colon \cA\to [0,\infty]$ with $\mu(\emptyset)=0.$
We say that $\mu$ is {\it finite} if $\mu(X)<\infty.$ 
A~monotone measure $\mu$ is
{\it continuous from below} if $\lim_{n\to\infty}\mu(A_n)=\mu\big(\lim_{n\to\infty} A_n\big)$ for all  
$A_n\in\cA$  such that $A_n\subset A_{n+1},$ $n\in\mN.$

Let $Y=[0,m)$ or $Y=[0,m],$ where $0<m\le \infty$; usually 
$Y=[0,1]$, $Y=[0,\infty)$ or $Y=[0,\infty].$    
The operator  $\circ\colon Y^2\to Y$ is said to 
be {\it nondecreasing} if
$a\circ b\le x\circ y$ for $a\le x$, $b\le y$.
We say that $\circ\colon Y^2\to Y$ is {\it right-continuous} if $\lim_{n\to\infty}(a_n\circ b_n)=a\circ b$ for all $a_n,b_n,a,b\in Y$ such that $b_n \searrow b$ and $a_n\searrow a.$ Hereafter,  $c_n\searrow c$ means that $\lim_{n\to\infty} c_n=c$ and $c_n>c_{n+1}$ for all $n.$  

Recall that  $f,g\colon X\to Y$ are {\it comonotone} on $D$ if $\big(f(x)-f(y)\big)\big(g(x)-g(y)\big)\ge 0$ for all $x,y\in D.$ 
If $f$ and $g$ are comonotone on $D,$  
then for any  $t\in Y$ either $(D\cap \lbrace f\ge t\rbrace) \subset (D\cap \lbrace g\ge t\rbrace)$ or $(D\cap\lbrace g\ge t\rbrace )
\subset (D\cap \lbrace f\ge t\rbrace),$
where $\lbrace f\ge t\rbrace=\lbrace x\in X\colon f(x)\ge t\rbrace.$ 

Now we will generalize the concept of comonotonicity. 

\begin{definition} Given an~operator $\star\colon Y^2\to Y,$
we say that $f,g\colon X\to Y$ are {\it $\star$-associated} on $D$ 
if for any nonempty and measurable subset  $A\subset D$, 
\begin{align}\label{funkcjeskojarzone}
 \inf_{x\in A} \left\{ f(x)\star g(x) \right\}=\inf_{x\in A} f(x)\star \inf_{x\in A} g(x).
 \end{align} 
\end{definition}

\noindent From now on,  $a\wedge b=\min\{a,b\},$  
$a\vee b=\max\{a,b\}$ and $a_+=a\vee 0.$ 
\medskip

\begin{example} \rm 
Any functions $f,g\colon X\to Y$ are  $\wedge$-associated on $X$. 
\end{example}

\begin{example} \rm 
Any comonotone functions $f,g\colon X\to Y$ are  $\star$-associated on $X$ if the operator $\star$ is nondecreasing and right-continuous. Indeed,  $\inf_{x\in A} \lbrace f(x)\star g(x)\rbrace \ge s\star t$ for all $A\subset X,$ where  $s=\inf_{x\in A} f(x)$  and $t=\inf_{x\in A} g(x).$ Let $\varepsilon>0,$ $A\subset X$ and $B=\lbrace x\in A\colon f(x)<s+\varepsilon\rbrace$ and $C= \lbrace x\in A\colon g(x)<t+\varepsilon\rbrace.$  From the comonotonicity we obtain that $B\cap C\neq \emptyset$ as $B\subset C$ or $C\subset B$. Thus $\inf_{x\in A}\lbrace f(x)\star g(x)\rbrace\le (s+\varepsilon )\star (t+\varepsilon ).$ Because of  the right-continuity of $\star,$ we get the assertion.
\end{example}

\begin{example}\rm Let $f,g\colon X\to Y$ be measurable functions and $g=b\mI{B}$ for $b\in Y$, where $\mI{B}$ denotes  the indicator of  $B\subset X,$  $B\cap \{f\ge b\}\neq B$ and 
$B\cap \{f\ge b\}\neq \{f\ge b\}.$ 
Let $\star\colon Y^2\to Y$ be a nondecreasing and right-continuous operator.
If $x\star 0=0$ for all $x\in X$, then $f,g$ are $\star$-associated on $X,$ but not comonotone.
Indeed, if $A\backslash B= \emptyset,$  then 
$A\subset B$ and  
\begin{align*}
\inf_{x\in A}\left\{f(x)\star g(x)\right\}=\inf_{x\in A}\left\{f(x)\star b\right\}=\inf_{x\in A}f(x)\star \inf _{x\in A}g(x).
\end{align*}
If $A\backslash B\neq \emptyset,$ then 
\begin{align*}
\inf_{x\in A}\left\{f(x)\star g(x)\right\}&=\inf_{x\in A\cap B}\left\{f(x)\star b\right\}\wedge \inf_{x\in  A\backslash B}\left\{f(x)\star 0\right\}=0\\
&=\inf_{x\in  A}f(x)\star \inf_{x\in  A}g(x).
\end{align*}
\end{example}

\begin{example}\rm Suppose $\star$ is a nondecreasing operator such that $0\star y=y\star 0=0$ for all $y\in Y.$
Let $f=b\mI{B}+c\mI{C}$ and $g=b\mI{B}+c\mI{D}$, where $b,c\in Y,$  $0<b\wedge c,$ and $B,\,C$ are nonempty sets such that $B\cap C=\emptyset$ and 
$D=X\backslash (B\cup C)\neq \emptyset.$  
Clearly,  $f$ and $g$ are $\star$-associated on $X$, but not comonotone.
\end{example}

\begin{example} \label{com} \rm Functions $f,g$ are $+$-associated  
if and only if they are  comonotone. 
In fact, the condition $\eqref{funkcjeskojarzone}$
for $\star=+$ and $A=\{x,y\}$ is equivalent to 
$(a+b)\wedge 0=(a\wedge 0)+(b\wedge 0)$ with $a=f(x)-f(y)$ and $b=g(x)-g(y)$, and 
this implies that $ab\ge 0.$ 
\end{example}
\medskip

{\bf Open problem 1.} Does there exist an~operator $\c\neq +$ such that 
the $\c$-associativity property is equivalent to the comonotonicity property?
\medskip

Now we are ready to  present  the Minkowski-H\"older type inequality for the {\it generalized upper Sugeno integral} of the form 
\begin{align}\label{calka1}
\calka{D}{f}:=\sup_{t\in Y} \left\{t\circ \mu \big(D\cap \lbrace f\ge t\rbrace\big)\right\},
\end{align} 
where $f\colon X\to Y$ is a~measurable function,   $\mu$ is a~monotone measure on $\cA$ and $\circ\colon Y\times \mu(\cA)\to [0,\infty]$ is a~nondecreasing operator.   
The functional in $\eqref{calka1}$ is the universal integral in the sense of  Definition $2.5$ in $\cite{klement3}$ if $\circ$ is the pseudomultiplication function (see  $\cite{klement3}$, Definition $2.3$). 

Put $\mu (\cA)=\{\mu(A)\colon A\in \cA\}$ and $\mu (\cA\cap D)=\{\mu(A\cap D)\colon A\in \cA\}$.
The following theorem gives an~answer  to open problems from $\cite{agahi4},$ $\cite{boczek1}$
and $\cite{ouyang6}.$

\begin{tw}\label{ctw7}
Assume the operators  $\star, \loo\colon Y^2\to Y$ and $\circ_i\colon Y\times \mu(\cA)\to Y$ are such that $\loo $ and $\circ _i$ are nondecreasing and $0\ci x=y\ci 0=0$ for all $y\in Y,$ $x\in \mu (\cA)$ and  $i=1,2,3$. Suppose  $\phi_i\colon Y\to Y$ are increasing and $\phi _i\big(Y\big)=Y$ for all $i.$   
Suppose  also that $f$ and $g$ are $\star$-associated on $D\subset X$. Then the Minkowski-H\"older type inequality
\begin{align}\label{c51}
\phi^{-1}_1\Big(\calka[\caa]{D}{\phi_1(f\st g)}\Big)\le \phi^{-1}_2\Big(\calka[\cbb]{D}{\phi_2(f)}\Big)\lo\phi^{-1}_3\Big(\calka[\ccc]{D}{\phi_3(g)}\Big)
\end{align}
is satisfied if and only if for all $a,b\in Y$ and $c\in\mu (\cA \cap D)$
\begin{align}\label{c52}
\phi^{-1}_1 \big(\phi _1(a\star b)\ca c\big)\le \phi^{-1}_2\big(\phi _2(a)\cb c\big)\lo \phi^{-1}_3\big(\phi _3(b)\cc c\big).
\end{align} 
\end{tw}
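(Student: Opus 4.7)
The plan is to prove the two implications separately: sufficiency (\eqref{c52} $\Rightarrow$ \eqref{c51}) via a level-set analysis of the upper Sugeno integral combined with $\star$-associativity, and necessity by testing \eqref{c51} on a carefully chosen pair.

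For sufficiency I fix $t\in Y$ and set $A_t:=D\cap\{\phi_1(f\st g)\ge t\}$ (the contribution from $A_t=\emptyset$ is zero since $y\ca 0=0$), with $a_t:=\inf_{A_t}f$ and $b_t:=\inf_{A_t}g$. Since $\phi_1$ is increasing and $\phi_1(Y)=Y$, the defining condition of $A_t$ forces $f\st g\ge\phi_1^{-1}(t)$ on $A_t$, and $\star$-associativity of $f,g$ yields $\inf_{A_t}(f\st g)=a_t\st b_t$, hence $\phi_1(a_t\st b_t)\ge t$. Monotonicity of $\caa$ gives $t\ca\mu(A_t)\le \phi_1(a_t\st b_t)\ca\mu(A_t)$, and invoking \eqref{c52} at $(a_t,b_t,\mu(A_t))\in Y\times Y\times\mu(\cA\cap D)$ bounds
\[
\phi_1^{-1}\bigl(t\ca\mu(A_t)\bigr)\le \phi_2^{-1}\bigl(\phi_2(a_t)\cb\mu(A_t)\bigr)\lo\phi_3^{-1}\bigl(\phi_3(b_t)\cc\mu(A_t)\bigr).
\]
Because $A_t\subset D\cap\{\phi_2(f)\ge\phi_2(a_t)\}$ (and symmetrically for $g$), monotonicity of $\mu$ and of $\cbb$ gives $\phi_2(a_t)\cb\mu(A_t)\le\calka[\cbb]{D}{\phi_2(f)}$, and analogously $\phi_3(b_t)\cc\mu(A_t)\le\calka[\ccc]{D}{\phi_3(g)}$. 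Monotonicity of $\lo$ and of each $\phi_i^{-1}$ then bounds $\phi_1^{-1}(t\ca\mu(A_t))$ uniformly in $t$ by the right-hand side of \eqref{c51}. Taking $\sup_{t\in Y}$ on the left and commuting it with $\phi_1^{-1}$ yields \eqref{c51}.

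For necessity, given arbitrary $a,b\in Y$ and $c=\mu(A)\in\mu(\cA\cap D)$ with $A\subset D$ measurable, I test \eqref{c51} on $f=a\mI{A}$ and $g=b\mI{A}$. Using the hypotheses $0\ci x=y\ci 0=0$ together with $\phi_i(0)=0$ (which follows from $\phi_i$ being an increasing surjection of $Y$ onto itself), each upper Sugeno integral collapses: the supremum defining $\calka[\cbb]{D}{\phi_2(f)}$ is attained at the threshold $\phi_2(a)$ on the set $A$, giving $\phi_2(a)\cb c$, and similarly $\calka[\ccc]{D}{\phi_3(g)}=\phi_3(b)\cc c$ and $\calka[\caa]{D}{\phi_1(f\st g)}=\phi_1(a\st b)\ca c$. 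Substituting these three evaluations into \eqref{c51} reproduces \eqref{c52} at the prescribed $(a,b,c)$, which was arbitrary.

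The main obstacle I anticipate is the final step of sufficiency, namely the commutation of $\sup_{t\in Y}$ with $\phi_1^{-1}$; this rests on the continuity of $\phi_1^{-1}$, which follows from $\phi_1$ being a strictly increasing surjection of the interval $Y$ onto itself (any strictly monotone surjection between intervals is continuous), but it merits an explicit sentence. A secondary technicality arises in necessity: one must verify $\star$-associativity of the test pair $(a\mI{A},b\mI{A})$ on mixed subsets $B\subset D$ with $B\cap A\neq\emptyset\neq B\setminus A$, which reduces to the compatibility $a\st b\ge 0\st 0$; this is harmless under the natural normalization $0\st 0=0$ that is already present in Examples~3 and~4 of the paper and consistent with the $\ci$ normalizations assumed.
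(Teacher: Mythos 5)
Your proof is correct, and both directions follow the paper's strategy in spirit: sufficiency reduces to applying \eqref{c52} to a triple of the form $\big(\inf_A f,\inf_A g,\mu(A)\big)$ after splitting an infimum via $\star$-associativity, and necessity tests \eqref{c51} on the pair $a\mI{A}$, $b\mI{A}$. The genuine difference lies in how the left-hand integral is decomposed. The paper first invokes the representation $\calka[\cii]{D}{f}=\sup_{A\subset D,\,A\in\cA}\big\{\inf_{x\in A}f(x)\ci\mu(A)\big\}$ (its \eqref{c53}, borrowed from \cite{suarez}) and then runs the estimate over \emph{all} measurable $A\subset D$; you instead work directly from the definition \eqref{calka1}, restricting attention to the level sets $A_t=D\cap\{\phi_1(f\st g)\ge t\}$ and using only the elementary half of that representation (namely $\inf_A h\ci\mu(A)\le\calka[\cii]{D}{h}$ for $A\subset D$) to bound the two right-hand integrals from below. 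This buys a more self-contained argument --- no appeal to the representation lemma --- at the cost of the small extra step $\phi_1(a_t\st b_t)\ge t$, which you verify correctly. The two technicalities you flag are real and well handled: continuity of $\phi_1^{-1}$ (an increasing surjection of an interval onto itself) justifies commuting it with the supremum, playing the same role as the paper's appeal to the continuity of $\phi_1$; and the $\star$-associativity of the indicator test pair in the necessity step, which does require something like $0\st 0\le a\st b$ on mixed subsets, is a point the paper's one-line necessity argument silently skips, so your explicit normalization makes the argument, if anything, more careful than the original. Note also that your treatment of empty level sets uses $t\ca 0=0$, which is available among the hypotheses even though the paper's remark claims that assumption is needed only for necessity.
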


\begin{proof} 
Arguing  as in the proof of  Lemma $3.8$ in $\cite{suarez},$  we can show that
\begin{align}\label{c53}
\calka[\cii]{D}{f}=\sup_{A\subset D,A\in\cA}\left\{\inf_{x\in A} f(x)\circ_i \mu(A)\right\}
\end{align}
for all $i$ (see also $\cite{hutnik1}$, Theorem $2.2$). To shorten the notation, we  
write $\sup _A$ instead of $\sup_{A\subset D,A\in\cA}.$
From the continuity of $\phi _1$ and $\eqref{c53}$ we get
\begin{align*}
L:=\phi^{-1}_1\Big(\calka[\caa]{D}{\phi_1(f\st g)}\Big)=\sup_{A}  \phi^{-1}_1 \Big( \phi _1\big(\inf_{x\in A}\left\{ f(x)\star g(x)\right\}\big) \ca\mu(A)\Big). 
\end{align*}
Since   $f$ and $g$ are $\star$-associated, we have 
\begin{align*}
L=\sup_{A}  \phi^{-1}_1\Big( \phi_1\big(\inf_{x\in A} f(x)\star\inf_{x\in A} g(x)\big) \ca\mu(A)\Big). 
\end{align*}
Combining $\eqref{c52}$ with 
the monotonicity of $\loo$ and $\phi ^{-1}_i$  yields
\begin{align*}
L&\le \sup_{A}\left\{\phi^{-1}_2\Big( \phi _2\big(\inf_{x\in A} f(x)\big)\cb\mu(A)\Big)\lo\phi^{-1}_3\Big( \phi _3\big(\inf_{x\in A} g(x)\big)\cc\mu(A)\Big)\right\}\\&\le \bigg(\sup_{A}\phi^{-1}_2\Big( \phi _2\big(\inf_{x\in A} f(x)\big)\cb\mu(A)\Big)\bigg)\lo \bigg(\sup_{A}\phi^{-1}_3\Big( \phi _3\big(\inf_{x\in A} g(x)\big)\cc \mu(A)\Big)\bigg)\\
&=\phi^{-1}_2\Big(\calka[\cbb]{D}{\phi_2(f)}\Big)\lo\phi^{-1}_3\Big(\calka[\ccc]{D}{\phi_3(g)}\Big).
\end{align*}
To obtain the necessary condition $\eqref{c52}$, put  $f=a\mI{A}$ and $g=b\mI{A}$ in  $\eqref{c51},$ where $c=\mu(A)\le \mu(D)$ and 
 $a,b\in Y.$
 \end{proof}

Observe that the assumption $0\ci x=y\ci 0=0$ is  used only in the proof of the necessity of  condition $\eqref{c52}.$ Moreover, 
the condition $\eqref{c52}$ is  sufficient for inequality $\eqref{c51}$ to hold if we set $Y=\mR$ in $\eqref{calka1}$ and  both $f$ and $g$ are bounded from below.

\begin{example}
Let $a\star b=a\lo b=a+b-ab,$ where $a,b\in Y=[0,1]$ and let $\circ_i=\cdot$ for all $i.$ Put $\phi_i(x)=x^{p_i}$  and $c_i=c^{1/p_i},$ where  $p_i>0$ for all $i.$ The condition $\eqref{c52}$ takes the form
\begin{align}\label{c54}
0\le a(c_2-c_1)+b(c_3-c_1)+ab(c_1-c_2c_3)
\end{align}
and holds if and only if  $p_1\le p_j$ for $j=2,3;$ in order to see this, put $a=1,$ $b=0$ as well as 
$a=0,$ $b=1$ in $\eqref{c54}$ and observe that
\begin{align*}
a(c_2-c_1)+b(c_3-c_1\big)+ab (c_1-c_2c_3)\ge ab\big(c_2-c_1+c_3(1-c_2)\big)\ge 0.
\end{align*}
\end{example}
      
We recall that the {\it Sugeno integral} and the {\it Shilkret integral} are given by
\begin{align}\label{sug}
\sint _D f \md\mu &:=\sup _{y\in Y}\left\{y\wedge \mu \big(D\cap \{f\ge y\}\big)\right\},\\ \label{shi}
\nint _D f \md\mu &:=\sup _{y\in Y}\left\{y\cdot \mu \big(D\cap\{f\ge y\}\big)\right\},
\end{align}
respectively, where $Y=[0,m]$ or  $Y=[0,m)$  with $0<m\le \infty,$  see $\cite{shilkret,sugeno1, zwang1}.$ 
\begin{corollary} 
Assume $\star\colon Y^2\to Y$ is nondecreasing, $f,g\colon X\to Y$ are $\star$-associated on $D$ and $\phi _i\colon Y\to Y$ are increasing functions such that $\phi _i\big(Y\big)=Y$
for $i=1,2,3.$  The following Minkowski--H\"{o}lder type inequality  
\begin{align*}
\phi^{-1}_1\Big(\sint_D\phi _1(f\star g)\md\mu\Big)\le \phi^{-1}_2\Big(\sint_D\phi _2 (f)\md\mu\Big)\star\phi^{-1}_3\Big(\sint_D\phi _3(g)\md\mu\Big)
\end{align*}
holds true if and only if for $a,b\in Y$ and $c\in\mu (\cA \cap D)$ 
\begin{align}\label{wu}
(a\star b)\wedge \phi^{-1}_1 (c)\le \big(a\wedge  \phi^{-1}_2 (c)\big)\star (b\wedge \phi^{-1}_3 (c)\big).
\end{align} 
\end{corollary}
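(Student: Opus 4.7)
The plan is to apply Theorem \ref{ctw7} with the particular choice $\circ_1=\circ_2=\circ_3=\wedge$ and $\loo=\star$. First I would verify the hypotheses of Theorem \ref{ctw7} for these operators: the operation $\wedge$ is nondecreasing as a map $Y\times\mu(\cA)\to Y$ and satisfies $0\wedge x=y\wedge 0=0$; $\star$ is nondecreasing by assumption; and $\phi_i\colon Y\to Y$ are the same increasing bijections in both statements. With these choices the generalized upper Sugeno integral $\int_{\wedge,D}h\md\mu$ in \eqref{calka1} reduces exactly to the Sugeno integral $\sint_D h\md\mu$ in \eqref{sug}, so the left-hand side of \eqref{c51} matches the left-hand side of the Minkowski--H\"older inequality in the corollary.

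Next I would translate the abstract necessary-and-sufficient condition \eqref{c52} of Theorem \ref{ctw7} into the concrete form \eqref{wu}. With the above substitutions, \eqref{c52} becomes
\begin{align*}
\phi_1^{-1}\bigl(\phi_1(a\star b)\wedge c\bigr)\le \phi_2^{-1}\bigl(\phi_2(a)\wedge c\bigr)\star\phi_3^{-1}\bigl(\phi_3(b)\wedge c\bigr).
\end{align*}
The key elementary observation is that because each $\phi_i$ is an increasing bijection of $Y$ onto $Y$, its inverse $\phi_i^{-1}$ is an increasing bijection as well and therefore commutes with $\wedge$, i.e.\ $\phi_i^{-1}(u\wedge v)=\phi_i^{-1}(u)\wedge\phi_i^{-1}(v)$ for $u,v$ in the common domain. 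Applying this identity with $u=\phi_i(\cdot)$ and $v=c$ (so that $\phi_i^{-1}(\phi_i(a))=a$, etc.) turns the displayed inequality into \eqref{wu}, term by term. Conversely, the same manipulation run in reverse shows that \eqref{wu} implies \eqref{c52}, so the two conditions are equivalent.

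I do not expect a serious obstacle here: the corollary is a direct specialization of Theorem \ref{ctw7}, and the only nontrivial point is the observation that an increasing bijection $\phi\colon Y\to Y$ commutes with $\wedge$. One minor technicality is ensuring that $\phi_i^{-1}(c)$ is well defined for $c\in\mu(\cA\cap D)$; this is not an issue because within the Sugeno integral the measure values interact with $Y$ only through $\wedge$, and the argument can be phrased equivalently with $c\wedge m$ in place of $c$, which belongs to $Y$. Once this is noted, the proof is a one-line reduction to Theorem \ref{ctw7}.
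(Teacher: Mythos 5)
Your proposal is correct and is exactly the intended derivation: the paper states this corollary as an immediate specialization of Theorem \ref{ctw7} with $\circ_1=\circ_2=\circ_3=\wedge$ and $\loo=\star$, and the translation of \eqref{c52} into \eqref{wu} via the identity $\phi_i^{-1}(u\wedge v)=\phi_i^{-1}(u)\wedge\phi_i^{-1}(v)$ for the increasing bijections $\phi_i$ is precisely the step the authors leave implicit. Your remark about $\phi_i^{-1}(c)$ for $c\in\mu(\cA\cap D)$ is a reasonable way to handle the only minor technicality.
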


The above result generalizes Theorem $3.1$ from $\cite{agahi4}$ and Theorem $3.1$ from $\cite{lwu}.$ In fact, 
since $a\vee b\le a\star b,$ we have  
$c\le a\vee c\le a\star c$, $c\le c\star b$ and 
$c\le c\star c,$
so 
\begin{align*}
(a\star b)\wedge c&\le (a\star b)\wedge  (a\star c)\wedge  (c\star b)
\wedge  (c\star c)=(a\wedge c)\star(b\wedge c).
\end{align*}
It follows from the assumption  $\phi _1\ge \phi _j$ for $j=2,3$ that
$$
(a\star b)\wedge \phi ^{-1}_1(c)
\le \big(a\wedge \phi ^{-1}_1(c)\big)\star\big(b\wedge \phi ^{-1}_1(c)\big) 
\le \big(a\wedge \phi ^{-1}_2(c)\big)\star\big(b\wedge \phi ^{-1}_3(c)\big).$$
Thus, the condition  $\eqref{wu}$ holds.
\medskip

Suppose that $\rS\colon [0,1]^2\to [0,1]$ is a~{\it semicopula} (also called 
a~$t$-{\it seminorm}), i.e., a~nondecreasing function 
with the neutral element equal to $1.$  It is clear that $\rS(x,y)\le x\wedge y$ 
and $\rS(x,0)=0=\rS(0,x)$ for all $x,y\in [0,1]$
(see $\cite{bas,dur,klement2}$).  We denote the class of all semicopulas
by $\fS$. 
There are three important examples of semicopulas: $\rM,$ $\Pi$ and $\rS_{\rL},$ where $\rM(a,b)=a\wedge b,$ $\Pi(a,b)=ab$ and $\rS_{\rL}(a,b)=(a+b-1)_+,$
usually called the {\it Łukasiewicz t-norm} $\cite{klement2}.$ 

Given $\rS\in \fS$, the {\it seminormed fuzzy integral}  is defined by
\begin{align}\label{semi}
\calka[\rS]{D}{f}:=\sup_{t\in [0,1]} \rS\big(t,\mu (D\cap\lbrace f\ge t\rbrace )\big),
 \end{align}  
see $\cite{ouyang1,suarez}.$ Replacing semicopula $\rS$ with $\rM$, we get the  Sugeno integral $\eqref{sug}$ for $Y=[0,1].$ 
Moreover, if  $\rS=\Pi,$ then  we get  the {\it Shilkret integral} $\eqref{shi}$ for $Y=[0,1]$. 
 
\begin{corollary} Let $\rS\in \fS$  and $f,g\colon X\to [0,1]$ be $\star$-associated, 
where $\star\colon [0,1]^2\to [0,1]$ is a~nondecreasing operator. 
Let $0<p<\infty$ and $\mu(\cA)=1.$ The following inequality holds   
\begin{align}\label{c55}
\Big(\calka[\rS]{D}{(f\st g)^p} \Big)^{1/p}\le \Big(\calka[\rS]{D}{f^p} \Big)^{1/p}\star\Big(\calka[\rS]{D}{ g^p}\Big)^{1/p}
\end{align}
if and only if 
\begin{align*}
\rS\big((a\star b)^p,c\big)^{1/p}\le \rS\big(a^p,c\big)^{1/p}\star \rS\big(b^p,c\big)^{1/p}
\end{align*} 
for $a,b\in [0,1]$ and $c\in\mu (\cA \cap D).$
\end{corollary}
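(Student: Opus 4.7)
The plan is to obtain this corollary as a direct specialization of Theorem \ref{ctw7}. I would set $\circ_i = \rS$ for every $i \in \{1,2,3\}$, take the outer operator $\loo = \star$, and choose $\phi_i(x) = x^p$ for all $i$. Under these substitutions, the abstract inequality $\eqref{c51}$ collapses to the target inequality $\eqref{c55}$, since $\phi_1^{-1}(x) = x^{1/p}$ and $\phi_1(f\st g) = (f\st g)^p$, and similarly for the integrals of $f^p$ and $g^p$ on the right-hand side.

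Next I would verify that the hypotheses of Theorem \ref{ctw7} are met under this specialization. The semicopula $\rS$ is nondecreasing, and since $\rS(x,0)=0=\rS(0,x)$ for all $x \in [0,1]$, the boundary condition $0 \ci x = y \ci 0 = 0$ holds. The map $x \mapsto x^p$ is strictly increasing and sends $Y=[0,1]$ bijectively onto $Y$ for any $p>0$. The assumption $\mu(\cA)=1$ (i.e.\ $\mu(\cA)\subset[0,1]$) ensures $\rS$ is well-defined as an operator $Y \times \mu(\cA) \to Y$. The operator $\loo = \star$ is nondecreasing by hypothesis, and $f, g$ are $\star$-associated on $D$ by assumption.

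With all hypotheses verified, Theorem \ref{ctw7} gives that $\eqref{c55}$ holds if and only if condition $\eqref{c52}$ holds. Substituting the chosen parameters, the left-hand side of $\eqref{c52}$ becomes $\phi_1^{-1}\bigl(\phi_1(a\st b)\ca c\bigr) = \rS\bigl((a\st b)^p, c\bigr)^{1/p}$, while the right-hand side becomes $\rS(a^p,c)^{1/p}\st \rS(b^p,c)^{1/p}$, which is exactly the equivalent pointwise condition stated in the corollary. The proof is thus essentially bookkeeping; the only step requiring care is matching the notational conventions (binary operator $\circ_1$ on the composite argument $\phi_1(a\st b)$ versus the semicopula $\rS$ viewed as a two-argument function), and confirming the boundary behaviour of $\rS$ matches the requirement in Theorem \ref{ctw7}. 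No genuinely new ideas are needed beyond specialization.
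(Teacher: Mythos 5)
Your proposal is correct and is exactly the intended derivation: the paper states this corollary without proof as a direct specialization of Theorem \ref{ctw7} with $\circ_i=\rS$, $\loo=\star$ and $\phi_i(x)=x^p$, and your verification of the hypotheses (monotonicity of $\rS$, the boundary condition $\rS(x,0)=0=\rS(0,x)$, surjectivity of $x\mapsto x^p$ on $[0,1]$, and $\mu(\cA)\subset[0,1]$) is precisely what is needed. No further comment is required.
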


Daraby and Ghadimi $\cite{daraby4}$ claim that the inequality 
$\eqref{c55}$ is satisfied if
\begin{align}\label{daraby}
\rS(a\star b,c)\le \big(\rS(a,c)\star b\big)\wedge \big(a\star \rS(b,c)\big),\quad a,b,c\in [0,1],
\end{align}
under the assumption of continuity of monotone measure $\mu$ (see  $\cite{daraby4}$, Theorem $3.1$). We present a~counterexample showing that this result is not true.
\medskip

\begin{counterexample}\label{kontrprzyklad}
Put $A=X=[0,1],$ $s=1,$ $\mathrm T=\rS_{\rL},$ $a\star b=(a+b)\wedge 1$ and $f(x)=g(x)=0.5\sqrt{x},$ $x\in [0,1],$ in Theorem $3.1$ from $\cite{daraby4}$.
Clearly, $f$ and $g$ are comonotone. Let $\mu$ be the Lebesgue measure.  Due to the property $a\star b=b\star a$,  the condition $\eqref{daraby}$ is satisfied  if and only if
\begin{align*}
\rS_{\rL}\big((a+b)\wedge 1,c\big)\le \big(\rS_{\rL}(a,c)+b\big)\wedge 1
\end{align*}
for all $a,b,c\in [0,1].$ Since $\rS_{\rL}\le 1,$ it suffices to show  that $\rS_{\rL}\big((a+b)\wedge 1,c\big)\le \rS_{\rL}(a,c)+b.$ In fact, if $a+b\le 1,$ then $\rS_{\rL}(a+b,c)\le \rS_{\rL}(a,c)+b$ (see also $\cite{klement2},$ Remark $5.13$\, $(iii)$). Otherwise, 
$$c\le (a+c-1)_++(1-a)_+=(a+c-1)_+-(a+b-1)+b\le \rS_{\rL}(a,c)+b.$$
Easy computations show that
\begin{align*}
\calka[\rS_{\rL}]{X}{f}&=\sup_{t\in [0,1]} \big(t+\mu\big(\lbrace f\ge t\rbrace \big)-1\big)_+=\sup_{t\in [0,1]} \big(t-4t^2\big)_+=0.0625,\\
\calka[\rS_{\rL}]{X}{(f\st g)}&=\sup_{t\in [0,1]} \big(t+\mu\big(\lbrace f\star g\ge t\rbrace \big)-1\big)_+=\sup_{t\in [0,1]} \big(t-t^2\big)_+=0.25.
\end{align*}
Hence, $0.25=\calka[\rS_{\rL}]{X}{(f\st g)}>\calka[\rS_{\rL}]{X}{f}\star \calka[\rS_{\rL}]{X}{g}=0.125.$
\end{counterexample}
\vspace{0.5cm}

Now we  focus on the subadditivity property of the generalized upper Sugeno integral $\eqref{calka1},$ that is, 
\begin{align}\label{suba}
\calka{X}{(f+g)}\le \calka{X}{f}+\calka{X}{g},
\end{align}
as this property is very important for applications. Let us recall that 
$+$-associativity is equivalent to comonotonicity, see Example  $\ref{com}.$

\begin{corollary}\label{cwn1} Let $Y=[0,m]$ or $Y=[0,m)$ for $0<m\le \infty$
and let $\circ \colon Y^2\to Y$ be 
a~nondecreasing operator such that $0\circ y=y\circ 0=0$ for all $y.$
The functional $\eqref{calka1}$ is subadditive for  comonotone functions 
$f,g\colon X\to Y$ such that $f+g\in Y$ if and only if 
$(a+b)\circ c\le (a\circ c)+(b\circ c)$ for  $a,b\in Y$, $a+b\in Y$ and $c\in\mu (\cA)=Y$.
\end{corollary}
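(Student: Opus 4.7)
The plan is to obtain this corollary as an immediate specialization of Theorem \ref{ctw7}. In that theorem I would set $\star=+$, $\lo=+$, $\phi_1=\phi_2=\phi_3=\mathrm{id}$, $\circ_1=\circ_2=\circ_3=\circ$, and $D=X$. With these choices, inequality \eqref{c51} becomes precisely the subadditivity \eqref{suba}, while the pointwise condition \eqref{c52} reduces to $(a+b)\circ c\le(a\circ c)+(b\circ c)$ for $a,b\in Y$ with $a+b\in Y$ and $c\in\mu(\cA)=Y$. The crucial observation is Example \ref{com}, according to which $+$-associativity coincides with comonotonicity; hence the hypothesis on the pair $(f,g)$ in the corollary matches the hypothesis of Theorem \ref{ctw7} under the above substitutions.

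For the sufficient direction I would simply invoke Theorem \ref{ctw7}. The only delicate point is that when $Y=[0,m]$ with $m<\infty$ the operator $+$ is only partially defined on $Y^2$, but the standing assumption $f+g\in Y$ keeps every quantity appearing inside the integral within $Y$, and one checks that all applications of $+$ in the proof of Theorem \ref{ctw7} (namely $f(x)+g(x)$ pointwise and the outer addition of the two integrals) remain within $Y$, so the argument carries through unchanged.

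For the necessary direction I would imitate the last line of the proof of Theorem \ref{ctw7}: given $a,b\in Y$ with $a+b\in Y$ and $c\in\mu(\cA)$, fix $A\in\cA$ with $\mu(A)=c$ and take $f=a\mathbbm{1}_A$, $g=b\mathbbm{1}_A$. These functions are comonotone with $f+g=(a+b)\mathbbm{1}_A\in Y$. Using the monotonicity of $\circ$ and the property $y\circ 0=0$, a short direct computation gives $\calka{X}{a\mathbbm{1}_A}=a\circ c$, and analogously $\calka{X}{b\mathbbm{1}_A}=b\circ c$ and $\calka{X}{(a+b)\mathbbm{1}_A}=(a+b)\circ c$. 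Substituting into \eqref{suba} yields the announced pointwise inequality. I do not anticipate a real obstacle here; the corollary is essentially a reformulation of Theorem \ref{ctw7} in additive language, the one nonobvious ingredient being the identification, via Example \ref{com}, of comonotonicity with $+$-associativity.
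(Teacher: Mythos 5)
Your derivation is correct and is exactly the route the paper intends: the corollary is stated without proof as the specialization of Theorem \ref{ctw7} to $\star=\lo=+$, $\circ_i=\circ$, $\phi_i=\mathrm{id}$, $D=X$, with Example \ref{com} supplying the identification of comonotonicity with $+$-associativity (the paper even recalls this identification in the sentence immediately preceding the corollary), and the necessity part via $f=a\mathbbm{1}_A$, $g=b\mathbbm{1}_A$ mirrors the last line of the theorem's proof. Your remarks on the partial definedness of $+$ on $[0,m]$ are a reasonable bonus; note only that the sum of the two integrals need not itself lie in $Y$, which is harmless since \eqref{suba} remains a meaningful inequality in $[0,\infty]$.
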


 It follows from Corollary $\ref{cwn1}$ that both the Sugeno integral $\eqref{sug}$ and the Shilkret integral $\eqref{shi}$
are subadditive for comonotone functions while
the opposite-Sugeno integral $\int _{\rS_{\rL},D}f\md\mu$
$\cite{imaoka}$ is not.

\begin{corollary}
Let $\c=\rS\in \fS.$ Then the subadditivity property $\eqref{suba}$ is fulfilled for any monotone measure $\mu$ such that $\mu (X)\le 1$ and  comonotone functions $f,g\colon X\to [0,1]$ such that 
$f+g\in [0,1]$ 
if and only if
\begin{align}\label{c57}
\rS\big(a+b,c\big)\le \rS(a,c)+\rS(b,c)
\end{align}
for all $a,b,c\in [0,1],$    $a+b\in [0,1].$
\end{corollary}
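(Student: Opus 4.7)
The plan is to specialize Theorem \ref{ctw7}. Take $\star = \lozenge = +$, $\circ_1 = \circ_2 = \circ_3 = \rS$, $\phi_i = \mathrm{id}$ on $[0,1]$ for $i = 1, 2, 3$, and $D = X$. All hypotheses of Theorem \ref{ctw7} are automatic: $\rS$ is nondecreasing with $\rS(0, x) = \rS(x, 0) = 0$ by the semicopula axioms, and by Example \ref{com} comonotonicity of $f, g$ coincides with $+$-associativity. Under this specialization the Minkowski-H\"older inequality \eqref{c51} reduces to the subadditivity inequality \eqref{suba}, while condition \eqref{c52} becomes $\rS(a + b, c) \le \rS(a, c) + \rS(b, c)$ for $a, b \in [0, 1]$ with $a + b \in [0, 1]$ and $c \in \mu(\cA \cap X)$.

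It remains to reconcile the range of $c$ with the full interval $[0,1]$ appearing in \eqref{c57}. For the ``if'' direction this is immediate: whenever $\mu(X) \le 1$, one has $\mu(\cA) \subseteq [0, 1]$, so the hypothesis \eqref{c57} over all $c \in [0,1]$ restricts to the pointwise condition on $\mu(\cA \cap X)$ required by Theorem \ref{ctw7}, which yields \eqref{suba} for every such $\mu$ and every comonotone pair with $f + g \in [0,1]$.

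For the ``only if'' direction, fix arbitrary $a, b, c \in [0, 1]$ with $a + b \in [0, 1]$. One needs a monotone measure with $\mu(X) \le 1$ whose range contains $c$, together with comonotone $f, g$ whose three upper Sugeno integrals collapse to $\rS(a, c)$, $\rS(b, c)$ and $\rS(a+b, c)$. A convenient choice is: pick any nonempty $A \in \cA$, define $\mu(B) = c$ if $A \subseteq B$ and $\mu(B) = 0$ otherwise (a monotone measure with $\mu(X) = c \le 1$), and take $f = a\mI{A}$, $g = b\mI{A}$. These are comonotone with $f + g \in [0, 1]$, and a brief direct computation from \eqref{calka1}, using $\rS(t,0)=0$ and monotonicity of $\rS$ in the first slot, gives
\[
\calka[\rS]{X}{f} = \rS(a, c), \qquad \calka[\rS]{X}{g} = \rS(b, c), \qquad \calka[\rS]{X}{(f + g)} = \rS(a + b, c).
\]
The assumed subadditivity on this data therefore reads precisely as \eqref{c57}. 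The only genuine subtlety in the argument is this bookkeeping on the range $\mu(\cA)$; no further obstacle arises, since everything else is a direct instantiation of Theorem \ref{ctw7}.
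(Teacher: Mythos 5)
Your proof is correct and follows essentially the same route as the paper, which presents this statement as a direct specialization of Theorem \ref{ctw7} (via Corollary \ref{cwn1}) with $\star=\lozenge=+$, $\circ_i=\rS$, $\phi_i=\mathrm{id}$, using Example \ref{com} to identify $+$-associativity with comonotonicity. Your explicit construction of a monotone measure realizing an arbitrary $c\in[0,1]$ in its range for the necessity direction is a detail the paper leaves implicit (its quantification over all $\mu$ with $\mu(X)\le 1$ requires exactly this), and it is handled correctly.
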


Borzová-Molnárová et al. $\cite{hutnik1}$ showed that the inequality 
$\eqref{c57}$ is satisfied for each 
semicopula with concave horizontal sections $x\mapsto \rS(x,y)$. 
An~example is the Marshall–Olkin semicopula 
$\rS_{\alpha,\beta}(x,y)=(x^{1-\alpha}y)\wedge (xy^{1-\beta}),$ where $\alpha,\beta \in [0,1].$
Observe that if $f=\mI{A}$ and $g=\mI{B}$ for $A\cup B=X$ and $A\cap B=\emptyset$, then  the inequality  
$\eqref{suba}$ is of the form
$\mu (X)\le \mu(A)+\mu (B)$ for any semicopula  $\rS.$
Thus, the seminormed fuzzy integral 
is not subadditive if $\mu (A)+\mu (B)<\mu (X).$

We say that $\mu\colon\cA\to Y$ is {\it subadditive} if it is a monotone measure and $\mu(A\cup B)\le\mu(A)+\mu(B)$ for all $A,B\in\cA .$
The class of subaditive measures is quite wide and includes  the following monotone measures:  $\lambda$-measure of Sugeno for $\lambda\in \big(-1/\mu(X),0\big)$ (see  $\cite{zwang1}$, Definition $4.3$); the 
{\it plausibility measure} $\cite{zwang1}$; the {\it coherent measure} $\mu (A)=\sup _{\sP\in \cP}\sP(A)$, where $\cP$ is a~set of probability measures
$\cite{folmer}$;  the {\it possibility measure} $\mu (A)=\sup _{x\in A}\psi (x)$, where $\psi\colon X\to Y$ $\cite{zwang1},$ the {\it distortion measure}
$\mu (A)=g\big(\sP(A)\big)$, where $\sP$ is probability measure and  $g\colon [0,1]\to Y$ is such that $g(x+y)\le g(x)+g(y)$  $\cite{ruschendorf}$ and {\it uncertain measure} $\cite{bliu11},$ among others.

\begin{tw}\label{tw_subad} Suppose $Y=[0,m]$ or 
$Y=[0,m)$ with $0<m\le \infty$ and suppose  $\c\colon Y^2\to Y$  is a~nondecreasing operator such that  
$x\c (y+z)\le (x\c y)+(x\c z)$ for all $x,y,z\in Y$ such that $y+z\in Y.$ Suppose also that  $(ax)\c y\le a^q(x\c y)^r$ for some $q,r>0$ and for all $x,y,z\in Y, a>1$ such that  
$ax\in Y.$ 
Then for any $p>0$, any  subadditive measure $\mu$   and any measurable functions $f,g\colon X\to \mR$  such that 
 $|f+g|^p,$ $|f|^p,$ $|g|^p\in Y,$ we have
\begin{align}\label{c58}
\Big(\calka{X}{|f+g|^p}\Big)^{1/(pq+1)}&\le \Big(\calka{X}{|f|^p}\Big)^{r/(pq+1)}+
\Big(\calka{X}{|g|^p}\Big)^{r/(pq+1)}.
\end{align} 
\end{tw}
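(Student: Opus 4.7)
The plan is to exploit the layer-cake representation $\eqref{c53}$ of the generalized upper Sugeno integral together with the sublinearity $|f+g|\le |f|+|g|$, the subadditivity of $\mu$, and the two structural hypotheses on $\circ$. The only free parameters are a pair $u,v>0$ that govern how a superlevel set of $|f+g|^p$ is split; choosing them optimally at the end is the key.

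For fixed $u,v>0$, set $a_1=((u+v)/u)^p>1$ and $a_2=((u+v)/v)^p>1$, and observe the inclusion
\[
\{|f+g|^p\ge t\}\subset \{|f|^p\ge t/a_1\}\cup\{|g|^p\ge t/a_2\},
\]
which follows by contraposition: if both $|f|<(u/(u+v))\,t^{1/p}$ and $|g|<(v/(u+v))\,t^{1/p}$, then $|f|+|g|<t^{1/p}$, so $|f+g|^p<t$. Next, the subadditivity of $\mu$, the monotonicity of $\circ$, and its subadditivity in the second argument give
\[
t\circ\mu(\{|f+g|^p\ge t\})\le t\circ\mu(\{|f|^p\ge t/a_1\})+t\circ\mu(\{|g|^p\ge t/a_2\}).
\]
Writing each $t$ as $a_i\cdot(t/a_i)$ and applying the hypothesis $(ax)\circ y\le a^q(x\circ y)^r$ (with $a=a_i>1$ and $ax=t\in Y$), the right-hand side is further bounded by
\[
a_1^q\big[(t/a_1)\circ\mu(\{|f|^p\ge t/a_1\})\big]^r+a_2^q\big[(t/a_2)\circ\mu(\{|g|^p\ge t/a_2\})\big]^r.
\]

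Taking $\sup_{t\in Y}$ on both sides, using that $x\mapsto x^r$ is increasing for $r>0$, and noting that $t/a_i$ ranges over a subset of $Y$ as $t$ does, the representation $\eqref{c53}$ yields
\[
\calka{X}{|f+g|^p}\le a_1^q A+a_2^q B,\qquad A:=\Big(\calka{X}{|f|^p}\Big)^{\!r},\quad B:=\Big(\calka{X}{|g|^p}\Big)^{\!r}.
\]
Since this holds for every $u,v>0$, it remains to minimize $((u+v)/u)^{pq}A+((u+v)/v)^{pq}B$ over $u,v>0$.

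The final step is a calculus exercise: by direct substitution (or Lagrange multipliers) one checks that the minimum is attained at $u=A^{1/(pq+1)}$, $v=B^{1/(pq+1)}$ and equals $(A^{1/(pq+1)}+B^{1/(pq+1)})^{pq+1}$. Taking the $(1/(pq+1))$-th power then produces exactly $\eqref{c58}$. I expect this last minimization, and in particular the realization that one must split the level set with two distinct weights $a_1\neq a_2$ rather than the naive symmetric choice $u=v$ (which only yields the crude bound with an extra factor $2^{pq}$), to be the main obstacle; once the right splitting parameters are identified, everything else reduces to routine manipulations of the hypotheses on $\mu$ and $\circ$.
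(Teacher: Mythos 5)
Your proof is correct and follows essentially the same route as the paper's: the same splitting of the superlevel set (your pair $(u,v)$ is just the reparametrization $\lambda=u/(u+v)$ of the paper's single weight $\lambda\in(0,1)$, so $a_1=\lambda^{-p}$, $a_2=(1-\lambda)^{-p}$), the same use of the subadditivity of $\mu$ and the two hypotheses on $\circ$, and the same final optimization giving $\big(A^{1/(pq+1)}+B^{1/(pq+1)}\big)^{pq+1}$. The only (minor) difference is that the paper explicitly disposes of the degenerate cases --- one of the integrals equal to $0$, or their sum infinite --- by a WLOG/limit argument, whereas your claim that the minimum is \emph{attained} at $u=A^{1/(pq+1)}$, $v=B^{1/(pq+1)}$ silently assumes $0<A,B<\infty$; taking an infimum over $u,v>0$ covers those cases as well.
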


\begin{proof}  Without loss of generality, assume that 
$\calka{X}{|f|^p}+\calka{X}{|g|^p}<\infty.$ 
Clearly, $\lbrace |f+g|\ge t^{1/p}\rbrace\subset \lbrace |f|\ge \lambda t^{1/p}\rbrace \cup\lbrace |g|\ge (1-\lambda) t^{1/p}\rbrace$ for  $t\in Y$ and $\lambda\in (0,1).$ Thus, by the subadditivity of $\mu$ and monotonicity of $\c,$ we have
\begin{align*}
t\c \mu\big(\lbrace |f+g|^p\ge t\rbrace \big)&\le t\c \Big(\mu \big(\lbrace |f|^p\ge \lambda ^pt\rbrace\big)+\mu \big(\lbrace |g|^p\ge (1-\lambda)^p t\rbrace\big) \Big).
\end{align*}
From the assumptions on $\c,$  we get
\begin{align*}
\calka{X}{|f+g|^p}&\le 
\sup _{t\in Y}\left\{ t\c \mu \big(\lbrace |f|^p\ge \lambda ^pt\rbrace\big)\right\}+
\sup _{t\in Y}\left\{ t\c \mu \big(\lbrace |g|^p\ge (1-\lambda )^pt\rbrace\big)\right\}\nonumber\\
&\le 
\sup _{y\in \lambda ^pY}\left\{ \tfrac{y}{\lambda^p}\c \mu\big(\lbrace |f|^p\ge y\rbrace\big)\right\}+
\sup _{y\in (1-\lambda)^pY}\left\{\tfrac{y}{(1-\lambda)^p}\c \mu \big(\lbrace |g|^p\ge y\rbrace\big)\right\}\nonumber\\
&\le \lambda^{-pq}\,\Big(\calka{X}{|f|^p}\Big)^r+(1-\lambda)^{-pq}\,\Big(\calka{X}{|g|^p}\Big)^r,
\end{align*}
where $\lambda ^pY=\{\lambda ^py\colon y\in Y\}\subset Y.$ If $\calka{X}{|f|^p}=0$ or $\calka{X}{|g|^p}=0,$ 
we take the limit as $\lambda$ approaches  $0$ or $1$, respectively.  Otherwise, we obtain
$\eqref{c58}$ by minimizing the right-hand side with respect to  $\lambda$.    
\end{proof}

\begin{corollary}
Let $Y=[0,1]$, 
$Y=[0,\infty)$ or $Y=[0,\infty]$.
If $\mu$ is subadditive, then for all measurable functions $f,g\colon X\to \mR$ and $p>0$ we have 
\begin{align*}
\left(\sint_X |f+ g|^p\md\mu\right)^{1/(p+1)}&\le \left(\sint_X |f|^p\md\mu\right)^{1/(p+1)}+\left(\sint_X |g|^p\md\mu\right)^{1/(p+1)},\\
\left(\nint_X |f+ g|^p\md\mu \right)^{1/(p+1)}&\le \left(\nint_X |f|^p\md\mu\right)^{1/(p+1)}+\left(\nint_X |g|^p\md\mu\right)^{1/(p+1)},
\end{align*}
where $|f+g|^p,$ $|f|^p,$ $|g|^p\in Y$ and the integrals are defined, respectively,  by $\eqref{sug}$ and $\eqref{shi}.$
\end{corollary}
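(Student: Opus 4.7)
The plan is to reduce both inequalities to Theorem~\ref{tw_subad} by choosing the operator $\c$ appropriately in each case. For the Sugeno integral I would take $\c=\wedge$, and for the Shilkret integral I would take $\c$ equal to ordinary multiplication. In both cases I would use $q=r=1$ in the scaling hypothesis of Theorem~\ref{tw_subad}, so that the exponents in its conclusion, $1/(pq+1)$ and $r/(pq+1)$, both collapse to $1/(p+1)$, which is exactly the exponent appearing in the corollary.

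The main work is to verify the three structural hypotheses of Theorem~\ref{tw_subad} for these two choices of $\c$. Monotonicity of $\wedge$ and of multiplication on $Y^2$ is clear. For subadditivity in the second slot, $x\c(y+z)\le (x\c y)+(x\c z)$, multiplication gives equality; for $\c=\wedge$ I would establish $x\wedge(y+z)\le(x\wedge y)+(x\wedge z)$ by a short three-case split according to whether $x$ lies below, between, or above $y$ and $z$. For the scaling inequality $(ax)\c y\le a(x\c y)$, multiplication again gives equality; for $\c=\wedge$ and $a>1$ it splits into the case $x\le y$ (then $(ax)\wedge y\le ax=a(x\wedge y)$) and the case $x>y$ (then $ax>x>y$, so $(ax)\wedge y=y\le ay=a(x\wedge y)$).

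Once these hypotheses are checked, Theorem~\ref{tw_subad} applied with $q=r=1$ yields
\begin{align*}
\Big(\calka{X}{|f+g|^p}\Big)^{1/(p+1)}\le \Big(\calka{X}{|f|^p}\Big)^{1/(p+1)} + \Big(\calka{X}{|g|^p}\Big)^{1/(p+1)},
\end{align*}
and unpacking the definition \eqref{calka1} with $\c=\wedge$ recovers \eqref{sug}, while taking $\c$ to be multiplication recovers \eqref{shi}. I do not anticipate any genuine obstacle: Theorem~\ref{tw_subad} was clearly engineered to specialize to precisely these two integrals, and the only step that is not immediate is the short case analysis showing that $\wedge$ is subadditive in its second argument.
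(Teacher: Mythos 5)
Your proposal is correct and follows exactly the route the paper intends: the corollary is stated without proof as a direct specialization of Theorem~\ref{tw_subad} with $q=r=1$, taking $\c=\wedge$ for the Sugeno integral and $\c=\cdot$ for the Shilkret integral, and your verification of the subadditivity and scaling hypotheses for these two operators is sound. Nothing further is needed.
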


The next result deals with a~modified Shilkret integral and follows from Theorem $\ref{tw_subad}$ and the inequality $(x+y)^s\le x^s+y^s$ for $x,y\ge 0$ and $0<s<1.$  

\begin{corollary}
Let  $a\c _q b=(ab)^q$ with $0<q<1$ and let $Y=[0,1]$ or $Y=[0,\infty)$. For any subadditive measure $\mu$ and 
any measurable functions $f,g\colon X\to \mR$, we get
\begin{align*}
\Big(\calka[\c_q]{X}{|f+g|^p}\Big)^{1/p}&\le \Big(\calka[\c_q]{X}{|f|^p}\Big)^{1/p}+\Big(\calka[\c_q]{X}{|g|^p}\Big)^{1/p},
\end{align*} 
where $p=1/(1-q),$  and $|f+g|^p,$ $|f|^p,$ $|g|^p\in Y.$ 
\end{corollary}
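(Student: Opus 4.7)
The plan is to verify the hypotheses of Theorem \ref{tw_subad} applied to the operator $\c_q$ and then massage the exponents so that the resulting bound matches the stated inequality. The key arithmetical miracle is that choosing $p=1/(1-q)$ makes $pq+1=p$, collapsing the Theorem's exponent $1/(pq+1)$ to $1/p$.

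First I would check that $\c_q$ is nondecreasing on $Y^2$, which is immediate from $q>0$ and nonnegativity. Next I would verify the distributivity-type estimate $x\c_q(y+z)\le (x\c_q y)+(x\c_q z)$: after factoring $x^q$ out of both sides this reduces to $(y+z)^q\le y^q+z^q$, which is exactly the subadditivity inequality for concave power functions recalled just before the corollary. Then I would verify the second assumption of Theorem \ref{tw_subad}: for $a>1$,
\begin{align*}
(ax)\c_q y = (axy)^q = a^q(xy)^q = a^q(x\c_q y),
\end{align*}
so the condition $(ax)\c y\le a^{q'}(x\c y)^r$ holds with $q'=q$ and $r=1$.

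With these choices, Theorem \ref{tw_subad} yields
\begin{align*}
\Big(\calka[\c_q]{X}{|f+g|^p}\Big)^{1/(pq+1)}\le \Big(\calka[\c_q]{X}{|f|^p}\Big)^{1/(pq+1)}+\Big(\calka[\c_q]{X}{|g|^p}\Big)^{1/(pq+1)}.
\end{align*}
Finally I would substitute $p=1/(1-q)$ and compute $pq+1=\tfrac{q}{1-q}+1=\tfrac{1}{1-q}=p$, so that $1/(pq+1)=1/p$ and the bound takes the claimed form.

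I do not expect any real obstacle: both conditions on $\c_q$ are one-line verifications, the first relying entirely on the hinted concavity inequality $(y+z)^q\le y^q+z^q$, and the second being a plain identity because $\c_q$ is multiplicatively homogeneous with $r=1$. The only point worth checking carefully is that $Y$ is closed under the operations needed (in particular $ax\in Y$ for $a>1$ near $1$ when $Y=[0,1]$); this is handled by the hypothesis that the relevant powers $|f|^p,|g|^p,|f+g|^p$ lie in $Y$, which in turn ensures the suprema in the proof of Theorem \ref{tw_subad} are taken over admissible $t$.
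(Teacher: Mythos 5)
Your proposal is correct and follows exactly the route the paper indicates: apply Theorem \ref{tw_subad} to $\c_q$, verifying the subadditivity hypothesis via $(y+z)^q\le y^q+z^q$ and the homogeneity hypothesis with exponents $q$ and $r=1$, then observe that $p=1/(1-q)$ gives $pq+1=p$. Nothing is missing.
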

Simple calculations show that 
$$
\Big(\calka[\c_q]{X}{|f|^p}\Big)^{1/p}=\sup _{t\in Y}\left\{ t^q\mu\big(\lbrace |f|\ge t\rbrace\big)^{q/p}\right\},
$$
so this functional is similar to a quasi-norm in the Lorentz type capacity spaces $\cite{cerda}.$

Now, we analyze the subadditivity of the Shilkret integral. Recall that a~monotone measure 
$\mu$ is {\it maxitive} if  for all disjoint sets $A,B\in \cA$ we have 
\begin{align}\label{max}
\mu (A\cup B)=\mu (A)\vee \mu (B).
\end{align}
Observe that $\mu$ is maxitive if and only if $\eqref{max}$ holds for all $A,B\in \cA$. 
In fact, if $\mu$ is maxitive and $A\cap B\neq \emptyset$, then 
$\mu (A\cup B)=\mu (A)\vee \mu (C)$ and $\mu (A\cup B)=\mu (D)\vee \mu (B),$
where $C=B\backslash A$ and $D=A\backslash B.$
This implies that 
$
\mu (A\cup B)=\mu (A)\vee \mu(B)\vee \mu (C)\vee \mu(D)=\mu (A)\vee \mu(B),
$
so  $\eqref{max}$ is satisfied. Clearly, any maxitive measure is subadditive. 

The following result can be found in $\cite{cat}$ (see also $\cite{shilkret}$ pp. $112$-$113$ and $\cite{cerda}$ Theorem $4.2$).

\begin{tw}\label{subShi} Let $Y=[0,1],$  $Y=[0,\infty)$ or $Y=[0,\infty].$ The Shilkret inegral $\eqref{shi}$ is subadditive 
for all measurable functions $f,g\colon X\to Y$  
if and only if the monotone measure $\mu$ is maxitive.  
\end{tw}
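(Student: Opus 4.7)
The strategy is to prove both directions separately. Sufficiency (maxitivity implies subadditivity) is a short computation exploiting the level-set inclusion that translates pointwise sums into set unions. Necessity (subadditivity implies maxitivity) proceeds by contradiction, testing the subadditivity inequality against carefully chosen pairs of scaled indicator functions supported on the two disjoint sets whose maxitive relation needs to be established.

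For sufficiency, assume $\mu$ is maxitive and set $F=\nint_X f\md\mu$, $G=\nint_X g\md\mu$. For every $\lambda\in(0,1)$ and $y\in Y$ one has
\begin{align*}
\{f+g\ge y\}\subset \{f\ge \lambda y\}\cup\{g\ge (1-\lambda)y\},
\end{align*}
so monotonicity and maxitivity of $\mu$ yield $\mu(\{f+g\ge y\})\le \mu(\{f\ge \lambda y\})\vee \mu(\{g\ge (1-\lambda)y\})$. Multiplying by $y\ge 0$ and rescaling gives $y\mu(\{f+g\ge y\})\le (F/\lambda)\vee(G/(1-\lambda))$. Taking the supremum over $y$ and optimizing with $\lambda=F/(F+G)$ (when both $F,G\in(0,\infty)$) makes both candidates equal $F+G$, which is the desired bound. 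The degenerate cases $F=0$, $G=0$, or infinite integrals follow by letting $\lambda\to 0$ or $\lambda\to 1$, or are immediate.

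For necessity, suppose there exist disjoint sets $A,B\in\cA$ with $M:=\mu(A\cup B)>\mu(A)\vee \mu(B)=:K$. In the case $M<\infty$, fix any $t\in(K/M,1)$ and any $s>0$ for which $s(1+t)\in Y$ (a trivial constraint unless $Y=[0,1]$), and test subadditivity against $f=s\mI{A}+st\mI{B}$ and $g=st\mI{A}+s\mI{B}$. A direct evaluation of the Shilkret integral on the intervals $(0,st]$ and $(st,s]$, together with the inequality $stM>s\mu(A)$ (which is precisely where $t>K/M$ is used), gives $\nint_X f\md\mu=\nint_X g\md\mu=stM$, whereas $f+g=s(1+t)\mI{A\cup B}$ gives $\nint_X(f+g)\md\mu=s(1+t)M$. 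Subadditivity then forces $(1+t)\le 2t$, i.e.\ $t\ge 1$, contradicting $t<1$. In the remaining case $M=\infty$ (only possible when $Y=[0,\infty]$), one necessarily has $\mu(A),\mu(B)<\infty$, so the indicators $f=\mI{A}$ and $g=\mI{B}$ satisfy $\nint_X f\md\mu+\nint_X g\md\mu=\mu(A)+\mu(B)<\infty=\mu(A\cup B)=\nint_X(f+g)\md\mu$, again violating subadditivity.

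The main obstacle I anticipate is not any deep idea but rather the bookkeeping of special cases: ensuring the test functions actually take values in $Y$ when $Y=[0,1]$ (hence the scaling factor $s$), and handling the infinite-measure case separately for $Y=[0,\infty]$. The key algebraic step, choosing $t\in(K/M,1)$ so that the computation of the Shilkret integral of the two-level step function picks up the term $stM$ rather than $s\mu(A)$ or $s\mu(B)$, is the pivot that converts a subadditive-but-not-maxitive measure into the obstruction required.
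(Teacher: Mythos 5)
Your proof is correct. The sufficiency direction is essentially the paper's argument: the same level-set inclusion, the same passage from unions to maxima, and the same optimal choice $\lambda=F/(F+G)$. The necessity direction, however, takes a genuinely different route. The paper picks $\lambda\in(0,1)$ with $\lambda\mu(A\cup B)>\mu(A)\vee\mu(B)$ and tests against the asymmetric pair $f=\mI{A}+\lambda\mI{B}$, $g=(1-\lambda)\mI{B}$, getting a strict inequality directly from $\mu(B)<\mu(A\cup B)$; you instead use the symmetric two-level pair $f=s\mI{A}+st\mI{B}$, $g=st\mI{A}+s\mI{B}$ and reduce the contradiction to the clean arithmetic statement $1+t\le 2t$ with $t<1$. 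Both constructions work, but yours buys two things the paper's does not make explicit: the scaling factor $s$ guarantees the test functions land in $Y$ even when $Y=[0,1]$, and your separate treatment of $M=\mu(A\cup B)=\infty$ via plain indicators repairs a genuine degeneracy in the paper's construction (there the claimed strict inequality collapses to $\infty<\infty$). Two small blemishes: your parenthetical that $M=\infty$ "is only possible when $Y=[0,\infty]$" is false, since $\mu$ takes values in $[0,\infty]$ regardless of $Y$ --- harmless, because your indicator argument works for every $Y$; and in the sufficiency step you apply maxitivity to the possibly non-disjoint sets $\{f\ge\lambda y\}$ and $\{g\ge(1-\lambda)y\}$, so you should note (as the paper does) that maxitivity on disjoint sets extends to arbitrary pairs.
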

\begin{proof}
First, observe that $\mu$ is maxitive if and only if $\eqref{max}$ holds for all $A,B\in \cA$. 
In fact, if $\mu$ is maxitive and $A\cap B\neq \emptyset$, then 
$\mu (A\cup B)=\mu (A)\vee \mu (C)$ and $\mu (A\cup B)=\mu (D)\vee \mu (B),$
where $C=B\backslash A$ and $D=A\backslash B.$
This implies that 
$
\mu (A\cup B)=\mu (A)\vee \mu(B)\vee \mu (C)\vee \mu(D)=\mu (A)\vee \mu(B),
$
so  $\eqref{max}$ is satisfied. 
Denote the Shilkret integal for short by $\bI (f)$.

,,$\Leftarrow$'' We follow the proof of $\cite{cat,shilkret}.$ 
If $\bI(f)=\bI(g)=0$, then $\bI(f+g)=0$ as $\mu \big(\lbrace f+g\ge t\rbrace \big)\le \mu \big(\lbrace f\ge t/2\rbrace\big)+\mu \big(\lbrace g\ge t/2\rbrace\big)=0$ for all $t>0.$
Therefore, we  assume that 
$0<\bI(f)+\bI(g)<\infty, $ without loss of generality. By maxitivity of $\mu$,  we have 
\begin{align*}
t\mu (\{f+g\ge t\})&\le t\mu\big(\lbrace f\ge \lambda t\rbrace \cup \lbrace g\ge (1-\lambda )t\rbrace\big)\\
&=t\mu \big(\lbrace f\ge \lambda t\rbrace\big)\,\vee\,t\mu \big(\lbrace g\ge (1-\lambda )t\rbrace\big)
\end{align*}   
with $\lambda=\bI (f)/(\bI (f)+\bI(g)).$ Hence,
\begin{align*}\bI (f+g)\le \bigl((\bI (f)/\lambda\bigr)\vee \bigl(\bI (g)/(1-\lambda)\bigr)=\bI(f)+\bI(g).
\end{align*}   

,,$\Rightarrow$'' Suppose $\mu$ is not maxitive, i.e. $\mu (A\cup B)>\mu(A)\vee \mu (B)$ for some disjoint sets $A,B\in \cA.$ 
Thus, there exists $\lambda \in (0,1)$ such that  $\lambda \mu (A\cup B)>\mu(A)\vee \mu (B).$
Putting $f=\mI{A}+\lambda \mI{B}$, $g=(1-\lambda)\mI{B}$, we get
\begin{align*}
\bI(f)+\bI(g)&=\big((\lambda\mu(A\cup B))\vee \mu(A)\big)+(1-\lambda)\mu (B)\\
&<\lambda\mu(A\cup B)+(1-\lambda)\mu (A\cup B)=\bI(f+g),
\end{align*}  
so the Shilkret integral is not subadditive.
\end{proof}

Subadditivity of the Sugeno integral will be examined in the next section.



\section{Results for generalized lower Sugeno integral}
The {\it generalized lower Sugeno integral} 
of a~measurable function $f\colon X\to Y$ on a~set $D\in \cA$ with respect to a monotone measure $\mu$ and nondecreasing operator
$\c\colon Y\times \mu (\cA)\to [0,\infty]$ is defined as
\begin{align}\label{dol9}
\dcalka[\c]{D}{f}:=\inf_{t\in Y}\left\{ t\c\mu\big(D\cap \lbrace f> t\rbrace \big)\right\}.
\end{align}  
Observe that the functional $\eqref{dol9}$ is the universal integral in the sense of  Definition $2.5$ in $\cite{klement3}$ if 
$a\circ 0=a$ and $0\circ b=b$ for all $a\in Y $ and $b\in \mu(\cA).$
Putting $\c=\vee$  in $\eqref{dol9}$ we obtain the {\it lower Sugeno integral} $\cite{murofushi2}$
\begin{align}\label{dol911}
\dolsug{D}{f}:=\inf_{t\in Y} 
\left\{ t\vee \mu\big(D\cap \lbrace f> t\rbrace \big)\right\}.
\end{align}  
Mimicking the proof of  Theorem $5$ in $\cite{kandel}$ and Theorem $9.1$ in $\cite{zwang1}$ 
one can show that for any $Y=[0,m]\subset [0,\infty]$ the integral $\eqref{dol911}$ 
is equal to the Sugeno integral $\eqref{sug}$
\begin{align}\label{cd16}
\dolsug{D}{f}=\sint_{D} f\md\mu.
\end{align}
\medskip

{\bf Open problem 2.} Does there exist a~pair of operators $(\trd,\trr)\neq (\vee,\wedge)$ such that for all $f\colon X\to Y$  
\begin{align}
\dcalka[\trd]{D}{f}=\calka[\trr]{D}{f}\;?
\end{align}

\medskip
We say that measurable functions $f,g\colon X\to Y$ are 
$\mu$-{\it subadditive} for an~operator
$\trdd\colon \mu (\cA)^2\to \mu (\cA)$ and  a~set	
$D\in \cA$ if for all  
$a,b\in Y$
\begin{align*}
\mu\Big(D\cap \big(\{f> a\}\cup \{g> b\}\big)\Big)\le \mu\big(D\cap\left\{f> a\right\}\big)\trd \mu\big(D\cap\left\{g> b\right\}\big).
\end{align*}
Observe that $\mu$-subadditivity implies that $x\vee y\le x\trd y$ for all $x,y\in \mu (\cA\cap D)$. 

Now, we present several examples of $\mu$-subadditive functions.

\begin{example} \label{dol_ex4}\rm    
Any comonotone functions $f,g$ are $\mu$-subadditive with respect to an~operator $\trd$ such that $x\vee y\le x\trd y$ for all $x,y\in Y.$  For instance, any $t$-semiconorm $\rS^*$ on $Y=[0,1]$ has this property  (see $\cite{klement2}$).
\end{example}

\begin{example}\label{dol_ex4a} \rm Recall that $\mu$ is {\it submodular} if $\mu (A\cup B)\le \mu (A)+\mu (B)-\mu (A\cap B)$ for all $A,B\in \cA.$   Let $D=X$, 
$x\trd y=1-(1-x)(1-y)$ for $x,y\in Y=[0,1]$ and let
$\mu$ be a~submodular and monotone measure. Functions $f,g$ are 
$\mu$-subadditive if $f,g$ are positive quadrant dependent $\cite{boczek1}$, that is, 
$\mu\big(\{f>t\}\cap\{g>s\}\big)\ge \mu\big(\{f>t\}\big)\mu\big(\{g>s\}\big)$ for all $t,s\in Y$.
\end{example}

\begin{example}\label{dol_ex4b} \rm Put
$x\trd y=x+y$ for
$x,y\in Y=[0,\infty].$ Then  
any functions $f,g$ are
$\mu$-subadditive for a~subadditive measure $\mu$ on $X.$
\end{example}

Suppose   $\st,\loo\colon Y^2\to Y,$ and $\c_i\colon Y\times \mu (\cA)\to Y,$ $i=1,2,3,$ 
are nondecreasing and 
$\loo$ is right-continuous. Suppose also that  $\phi_i\colon Y\to Y$ is an increasing function and
$\phi_i(Y)=Y$ for  $i=1,2,3.$  

\begin{tw}\label{cdtw1} Assume that
for $a,b\in Y$ and $c,d\le \mu(D)$, we have
	\begin{align}\label{cd6}
\phi^{-1}_1\big(\phi_1(a\st b)
\c_1 (c \trd d)\big)\le \phi_2^{-1}\big(\phi_2(a)\c_2 c\big)\,\lo\, \phi_3^{-1}\big(\phi_3(b)\c_3 d\big).
	\end{align} 
If $f,g$ are $\mu$-subadditive for $\trd$
and~$D$, then 
\begin{align}\label{cd7}
\phi_1^{-1}\bigg(\dcalka[\caa]{D}{\phi _1\big(f\st g\big)}\bigg)\le \phi_2^{-1}\bigg(\dcalka[\cbb]{D}{\phi_2(f)}\bigg)\,
\lo\, \phi_3^{-1}\bigg(\dcalka[\ccc]{D}{\phi_3(g)}\bigg).
\end{align}
\end{tw}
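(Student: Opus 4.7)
The plan is to mimic the proof of Theorem~\ref{ctw7}, but, instead of using the supremum representation \eqref{c53}, to evaluate the infimum \eqref{dol9} at one cleverly chosen point. First, since each $\phi_i$ is an increasing bijection of the interval $Y$, it is continuous and $\{\phi_i(h)>\phi_i(t)\}=\{h>t\}$ for every measurable $h$, so the substitution $t\mapsto\phi_i(t)$ yields
$$
\dcalka[\cii]{D}{\phi_i(h)}=\inf_{t\in Y}\bigl\{\phi_i(t)\ci \mu\bigl(D\cap\{h>t\}\bigr)\bigr\}
$$
for $i=1,2,3$.

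Fix $a,b\in Y$. Because $\st$ is nondecreasing, $f(x)\le a$ and $g(x)\le b$ force $f(x)\st g(x)\le a\st b$; hence $\{f\st g>a\st b\}\subset\{f>a\}\cup\{g>b\}$. Combined with the $\mu$-subadditivity of $f,g$ for $\trdd$ and~$D$, and writing $c=\mu(D\cap\{f>a\})$, $d=\mu(D\cap\{g>b\})$, this yields $\mu(D\cap\{f\st g>a\st b\})\le c\trd d$, with $c,d\le\mu(D)$. Taking the single point $t=\phi_1(a\st b)$ inside the infimum defining the left-hand side of \eqref{cd7}, together with monotonicity of $\caa$ and of $\phi_1^{-1}$, and finally hypothesis \eqref{cd6}, produces
\begin{align*}
\phi_1^{-1}\Big(\dcalka[\caa]{D}{\phi_1(f\st g)}\Big)
&\le \phi_1^{-1}\bigl(\phi_1(a\st b)\ca\mu(D\cap\{f\st g>a\st b\})\bigr)\\
&\le \phi_1^{-1}\bigl(\phi_1(a\st b)\ca(c\trd d)\bigr)\\
&\le \phi_2^{-1}\bigl(\phi_2(a)\cb c\bigr)\lo \phi_3^{-1}\bigl(\phi_3(b)\cc d\bigr).
\end{align*}

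Since this bound is valid for all $a,b\in Y$, it remains to replace $a,b$ by the infima defining the right-hand integrals in \eqref{cd7}. One may assume without loss of generality that both of them are finite. For $\varepsilon>0$, choose $a^*,b^*\in Y$ so that $\phi_2^{-1}(\phi_2(a^*)\cb\mu(D\cap\{f>a^*\}))$ and $\phi_3^{-1}(\phi_3(b^*)\cc\mu(D\cap\{g>b^*\}))$ exceed the respective infima by at most $\varepsilon$. Monotonicity of $\lo$ then bounds $\phi_1^{-1}\bigl(\dcalka[\caa]{D}{\phi_1(f\st g)}\bigr)$ by the $\lo$-combination of these two $\varepsilon$-approximations; letting $\varepsilon\searrow 0$ and using right-continuity of $\lo$ delivers \eqref{cd7}.

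The main technical obstacle is this final step: commuting the infima over $a$ and $b$ with the binary operation $\lo$. This is precisely where the right-continuity assumption on $\lo$ is essential; the rest of the argument is a direct combination of a level-set inclusion, the $\mu$-subadditivity, and a single-point evaluation of the infimum. A minor preliminary point is to ensure the change of variable in the opening step is valid, which follows from the fact that an increasing bijection between intervals of $[0,\infty]$ is automatically continuous.
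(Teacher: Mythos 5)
Your proposal is correct and follows essentially the same route as the paper's proof: the level-set inclusion $\{f\st g>a\st b\}\subset\{f>a\}\cup\{g>b\}$ combined with $\mu$-subadditivity, a single-point evaluation of the defining infimum at $t=\phi_1(a\st b)$, an application of \eqref{cd6}, and finally passing to the infima over $a$ and $b$ (where the right-continuity of $\lo$ is needed, a step the paper states more tersely as ``taking the infimum over $a$'' and ``proceeding similarly with $b$'' but which you justify explicitly).
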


\begin{proof} 
By the monotonicity of $\star$ and $\mu$,  
for any $D\in \cA$ we obtain
\begin{align*}
\mu\big(D\cap \left\{f\st g>a\st b\right\}\big)\le\mu\Big( D\cap \big(\{f>a\}\cup \{g>b\}\big)\Big).
\end{align*}
From~$\mu$-subadditivity of  $f,g$ and from the fact that $b\mapsto a\c_1b$ is a nondecreasing function we get
\begin{align}\label{cd10}
\phi _1&(a\st b)\c_1\mu\big( D\cap\left\{\phi _1(f\st g)>\phi _1(a\st b)\right\}\big)\nonumber\\&\le 
\phi _1(a\st b)\circ_1\Big(\mu\big(D\cap\lbrace \phi _2(f)>\phi_{2}(a)\rbrace\big)\trd \mu\big(D\cap \lbrace \phi _3(g)>\phi_{3}(b)\rbrace \big)\Big).
\end{align}
By $\eqref{cd6}$ and $\eqref{cd10}$
\begin{align}\label{cd11}
\phi^{-1}_1\Big(\phi _1&(a\st b)\circ _1 \mu\big(D\cap\lbrace \phi _1(f\st g)>\phi _1(a\st b)\rbrace\big)\Big)\nonumber
\\\le &\phi_{2}^{-1}\Big(\phi_{2}(a)\circ _{2}\mu\big(D\cap\lbrace \phi_{2}(f)>\phi_{2}(a)\rbrace\big)\Big)\lo
\phi_{3}^{-1}\Big(\phi_{3}(b)\circ _{3}\mu\big(D\cap\left\{\phi_{3}(g)>\phi_{3}(b)\right\}\big)\Big).
\end{align}
Since $\phi_1^{-1}$ is increasing, we have
\begin{align*}
\phi_1^{-1}&\bigg(\dcalka[\caa]{D}{\phi _1 (f\st g )}\bigg)\\
&\le \phi_{2}^{-1}\Big(\phi_{2}(a)\circ _{2}\mu\big(D\cap\lbrace\phi_{2}(f)>\phi_{2}(a)\rbrace\big)\Big)\lo
\phi_{3}^{-1}\Big(\phi_{3}(b)\circ _{3}\mu\big(D\cap\lbrace \phi_{3}(g)>\phi_{3}(b)\rbrace\big)\Big)
\end{align*}
for all $a,b \in Y$.  Taking the infimum over $a\in Y,$ we get
\begin{align*}
\phi_1^{-1}\bigg(\dcalka[\caa]{D}{\phi _1\big(f\st g\big)}\bigg)\le 
\phi_2^{-1}\bigg(\dcalka[\cbb]{D}{\phi _2(f)}\bigg)
\lo\, \phi_3^{-1}\Big(\phi_3(b)\circ _3\mu\big(D\cap \left\{\phi_3(g)>\phi_3(b)\right\}\big)\Big).
\end{align*}
Proceeding similary with the infimum in $b\in Y$, we obtain $\eqref{cd7}.$
\end{proof}

\begin{example}\label{cdex1}
We know from Example $\ref{dol_ex4}$ that any comonotone 
 functions $f,g\colon X\to Y$  are $\mu$-subadditive with $\trd =\vee$. Put
$Y=[0,\infty].$
If 
\begin{align}\label{cd14}
(a\st b)\vee\big(\phi_1^{-1}(c)\vee\phi_1^{-1}(d)\big) \le \big(a\vee \phi_2^{-1}(c)\big)\st\big(b\vee \phi_3^{-1}(d)\big),
\end{align}
then for all $D\in\cA$ we get 
\begin{align}\label{cd15}
\phi_1^{-1}\bigg(\dolsug{D}{\phi_1(f\st g)}\bigg)\le \phi_2^{-1}\bigg(\dolsug{D}{\phi_2(f)} \bigg)\st \phi_3^{-1}\bigg(\dolsug{D}{\phi_3(g)}\bigg).
\end{align}
The inequality $\eqref{cd14}$ is satisfied for any operator $\st$ such that  $a\st b\ge a\vee b$ and functions
$\phi_1\ge\phi_i,$ $i=2,3.$ Indeed, combining 
$a_1\st a_2\le \big(a_1\vee \phi_2^{-1}(b_1)\big)\st \big(a_2\vee \phi _3^{-1}(b_2)\big)$ with
$$\phi_1^{-1}(b_1)\vee\phi_1^{-1}(b_2)\le \phi_1^{-1}(b_1)\st\phi_1^{-1}(b_2)\le \big(a_1\vee \phi_2^{-1}(b_1)\big)\st \big(a_2\vee \phi_3^{-1}(b_2)\big)$$
yields $\eqref{cd14}.$ 
From $\eqref{cd16}$  and $\eqref{cd15}$ we can get a~generalization of  Theorem $3.1$ in $\cite{lwu}.$ 
\end{example}

\begin{example}\rm 
Let $Y=[0,1],$ $D=X$ and $\mu(X)=1.$ Put $x\trd y=x+y-xy$ and $x\star y=x\lo y=(x+y)\wedge 1,$  where $x,y\in Y.$ If $\circ _i=\vee$ and $\phi_i(x)=x,$ $i=1,2,3,$ then the condition $\eqref{cd6}$ takes the form
\begin{equation}\label{dol10}
\big((a+b)\wedge 1\big)\vee (c+d-cd)\le (a\vee c+b\vee d)\wedge 1,
\end{equation}
$a,b,c,d\in Y.$
Since $(a+b)\wedge 1\le a+b\le (a\vee c)+(b\vee d)$ and 
$c+d-cd\le (a\vee c+b\vee d),$
the inequality $\eqref{dol10}$ is true for all $a,b,c,d$.
Hence, if $f,g\colon X\to [0,1]$ are positive quadrant dependent  functions  
with respect to a submodular and monotone measure  $\mu$ on $X$ (see Example $\ref{dol_ex4a}$), then  
\begin{align}\label{dol11}
\dolsug{X}{(f+g)\wedge 1}\le \dolsug{X}{f}+\dolsug{X}{g}.
\end{align}
\end{example}

\begin{example}\label{dol_ex6}\rm 
Set $Y=[0,\infty]$ and $\trd =\star =\loo=+.$ Let $\mu$  be a~subadditive measure,  
$\circ _i=\vee $ and $\phi _i(x)=x$ for all $i$. Then 
\begin{align}\label{dol12}
\dolsug{X}{(f+g)} \le\dolsug{X}{f} +\dolsug{X}{g}
\end{align}
for all $f,g\colon X\to Y$.
\end{example}

Next, we prove that the subadditivity property of Sugeno integral $\eqref{sug}$ 
with $Y=[0,1]$ or $Y=[0,\infty)$ is  
equivalent to  subadditivity of a finite measure $\mu.$  

\begin{tw}\label{dol_twsub} If $\mu$ is subadditive, then  
\begin{align}\label{subS}
\sint_{X} (f+g)\md\mu\le \sint_X f\md\mu+\sint_X g\md\mu
\end{align}
for all $f,g\colon X\to Y.$ Moreover,  
if $\eqref{subS}$ holds for all measurable functions $f,g\colon X\to Y$ and $\mu (X)<\infty$, then $\mu$ is 
subadditive.   
\end{tw}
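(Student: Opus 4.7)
The plan is to prove the two implications separately. For sufficiency I would work at the level of super-level sets and exploit the subadditivity of $\mu$. Set $A:=\sint_X f\md\mu$ and $B:=\sint_X g\md\mu$; we may assume $A+B<\infty$, as otherwise \eqref{subS} is vacuous. The key observation is that for every $s>A$ one has $s\wedge \mu(\{f\ge s\})\le A<s$, which forces $\mu(\{f\ge s\})\le A$; similarly $\mu(\{g\ge s'\})\le B$ for every $s'>B$. Fix $t>A+B$ and choose a splitting $t=s_1+s_2$ with $s_1>A$ and $s_2>B$. The elementary inclusion
\[
\{f+g\ge t\}\subset \{f\ge s_1\}\cup \{g\ge s_2\},
\]
together with subadditivity of $\mu$, gives $\mu(\{f+g\ge t\})\le \mu(\{f\ge s_1\})+\mu(\{g\ge s_2\})\le A+B$; for $t\le A+B$ the bound $t\wedge \mu(\cdot)\le t\le A+B$ is immediate. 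Taking the supremum over $t\in Y$ then delivers \eqref{subS}.

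For necessity I would use the standard test-function strategy. Given disjoint $A,B\in\cA$ and $c\in Y$, set $f=c\mI{A}$ and $g=c\mI{B}$, so that $f+g=c\mI{A\cup B}$. A direct level-set computation gives $\sint_X c\mI{E}\md\mu=c\wedge \mu(E)$ for every $E\in\cA$, and hence \eqref{subS} reduces to
\[
c\wedge \mu(A\cup B)\le \bigl(c\wedge \mu(A)\bigr)+\bigl(c\wedge \mu(B)\bigr).
\]
Since $\mu(X)<\infty$, for $Y=[0,\infty)$ one may choose $c=\mu(X)\in Y$, which dominates $\mu(A)$, $\mu(B)$ and $\mu(A\cup B)$ simultaneously; the three minima then collapse and we obtain $\mu(A\cup B)\le \mu(A)+\mu(B)$ in the disjoint case. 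The passage to arbitrary $A,B\in\cA$ is routine: write $A\cup B=A\cup (B\setminus A)$ as a disjoint union and invoke monotonicity of $\mu$. For $Y=[0,1]$ the same pattern applies in the natural normalised setting $\mu(X)\le 1$ by taking $c=1$.

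The point I expect to be most delicate is the implication ``$s>A$ implies $\mu(\{f\ge s\})\le A$''. It is a small but crucial observation that exploits the specific form of the inner minimum in the Sugeno integral and is what allows the forward direction to proceed directly from the definition, with no further structure on $\mu$ beyond subadditivity. Everything else — the set inclusion on super-level sets, the single application of subadditivity of $\mu$, and the level-set formula for the Sugeno integral of $c\mI{E}$ — is essentially bookkeeping.
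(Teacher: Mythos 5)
Your proof is correct, but the sufficiency direction follows a genuinely different route from the paper's. The paper obtains \eqref{subS} by quoting the identity \eqref{cd16} between the lower and upper Sugeno integrals together with the subadditivity \eqref{dol12} of the lower integral established in Example \ref{dol_ex6} (itself a consequence of Theorem \ref{cdtw1} with $\trdd=\star=\lozenge=+$ and a subadditive $\mu$). You instead argue directly on the upper Sugeno integral: the observation that $s>\sint_X f\md\mu$ forces $\mu(\{f\ge s\})\le\sint_X f\md\mu$, combined with the inclusion $\{f+g\ge s_1+s_2\}\subset\{f\ge s_1\}\cup\{g\ge s_2\}$ and a single application of subadditivity of $\mu$, yields a short self-contained argument that needs none of the lower-integral machinery; it is the Sugeno analogue of the splitting used in the paper's proof of Theorem \ref{subShi} for the Shilkret integral. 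What the paper's route buys is uniformity (the same Theorem \ref{cdtw1} covers many operators at once); what yours buys is transparency and independence from \eqref{cd16}. The necessity direction is essentially the paper's argument: the paper plugs in $f=a\mI{A}$, $g=a\mI{B}$ for arbitrary, not necessarily disjoint, $A,B$ and takes $a\ge \mu(A\cup B)$, which makes your reduction to the disjoint case unnecessary but otherwise matches your computation. One small point in your favour: you explicitly flag that for $Y=[0,1]$ the test constant $c$ must satisfy $c\ge\mu(A\cup B)$ inside $Y$, hence the normalisation $\mu(X)\le 1$; the paper's proof silently carries the same restriction.
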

\begin{proof} The inequality $\eqref{subS}$ follows immediately from  
$\eqref{dol12}$ and $\eqref{cd16}$.
Moreover, let $f=a\mI{A},$ $g=a\mI{B},$ where $a\ge 0,$ $A,B\in\cA.$   
From $\eqref{subS}$ and the finite and monotone measure $\mu,$ we have \begin{align*}
\mu(A\cup B)=a\wedge \mu(A\cup B)\le \big(a\wedge \mu(A)\big)+\big(a\wedge\mu(B)\big)\le \mu(A)+\mu(B)
\end{align*}
for $a\ge \mu(A\cup B)$, which completes the proof.
\end{proof} 

The assumption $\mu(X)=\infty$ in Theorem $\ref{dol_twsub}$ cannot be omitted. Indeed, 
if $\mu$ is a~subadditive  measure such that $\mu (X)=\infty$ and $\mu(A),\mu (B)<\infty$ for some $A,B,$  such that $A\cup B=X,$
 then  the inequality $\eqref{subS}$ is not true for 
$f=a\mI{A},$ $g=a\mI{B}$ and $a>\max (\mu (A),\mu (B))$.  


\medskip
Now, we show that from the Minkowski-H\"{o}lder type inequality for  integral $\eqref{calka1}$ one can obtain
an~inequality of the same type for  integral $\eqref{dol9}$ and vice versa. Suppose 
$Y=[0,m]$, $0<m\le \infty$. Let $h\colon Y\to Y$ be a decreasing function such that $h(Y)=Y,$  
$h(0)>0$ and~$h\big(m)=0$. For instance,   $h(x)=1-x$ for $Y=[0,1]$ and 
$h(x)=1/x$ for  $Y=[0,\infty]$ under convention that  $1/0=\infty$ and $1/\infty=0.$ Suppose 
$\mu_h$ is a~monotone measure on 
$(X,\cA)$ defined as
$\mu_h(A)=h^{-1}\big(\mu(X\backslash A)\big).$
For a given operator  
$\circ\colon Y^2\to Y$ 
let us define the operator  
 \begin{align*}
a \ch b=h^{-1}\big(h(a)\circ h(b)\big),\quad a,b\in Y.
\end{align*}
For any measurable function
$f\colon X\to Y,$ we have 
\begin{align}\label{dol13}
h^{-1}\bigg(\dcalka[\c]{X}{h(f)} \bigg)&=
h^{-1}\Big(\inf _{y\in Y}\left\{h(y)\c \mu(\{h(f)>h(y)\}\big)\right\}\Big)\nonumber\\&=
\inf _{y\in Y}\left\{h^{-1}\Big(h(y)\c \mu(\{f\le y\}\big)\Big)\right\}=\calka[\c_h]{X}{f}.
\end{align}
Applying the formula $\eqref{dol13}$ and Theorem  $\ref{ctw7}$ with  $\star=\loo$ and $\phi _i(x)=x$ for all $i$ gives the following Corollary.  

\begin{corollary}\label{dol_colh}  Assume $\c\colon Y^2\to Y$ is nondecreasing, $m\c y=y\c m=m$ for all $y\in Y=[0,m]$ 
and $f,g\colon X\to Y$ are $\star$-associated. The following inequality is satisfied 
\begin{align*}
h^{-1}\bigg(\dcalka[\c]{X}{h(f\star g)}\bigg)\le 
h^{-1}\bigg(\dcalka[\c]{X}{h(f)}\bigg)\star 
h^{-1}\bigg(\dcalka[\c]{X}{h(g)}\bigg)
\end{align*}
if and only if  
$(a\star b)\c _h c\le (a\c _h c)\star (b\c _h c)$
for all $a,b,c\in Y.$
\end{corollary}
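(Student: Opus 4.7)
The plan is to reduce the claim directly to Theorem~\ref{ctw7} by means of the conjugation identity~(\ref{dol13}). Applying that identity to each of $f\st g$, $f$, and $g$ separately, the inequality in the corollary becomes equivalent to
\begin{align*}
\calka[\c_h]{X}{f\st g}\le \calka[\c_h]{X}{f}\st \calka[\c_h]{X}{g},
\end{align*}
which is a Minkowski--H\"older inequality for the generalized upper Sugeno integral with the single operator $\c_h$.

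Next I would verify that $\c_h$ satisfies the hypotheses of Theorem~\ref{ctw7}. Monotonicity is immediate: if $a\le a'$ and $b\le b'$, then $h(a)\ge h(a')$ and $h(b)\ge h(b')$ since $h$ is decreasing, hence $h(a)\c h(b)\ge h(a')\c h(b')$ by monotonicity of $\c$, and applying the decreasing map $h^{-1}$ yields $a\c_h b\le a'\c_h b'$. For the boundary condition, note that $h(Y)=Y$ together with $h(m)=0$ forces $h(0)=m$; combining the hypothesis $m\c y=y\c m=m$ with $h^{-1}(m)=0$ gives $0\c_h c=h^{-1}(m\c h(c))=0$ and $c\c_h 0=h^{-1}(h(c)\c m)=0$ for every $c\in Y$.

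Then I would invoke Theorem~\ref{ctw7} with $\lo=\st$, $\circ_1=\circ_2=\circ_3=\c_h$ and $\phi_i=\mathrm{id}$ (which trivially satisfy the increasing and onto requirements). Since $f,g$ are $\st$-associated on $X$, the theorem asserts the equivalence of the transformed inequality displayed above with the pointwise condition
\begin{align*}
(a\st b)\c_h c\le (a\c_h c)\st (b\c_h c)
\end{align*}
for $a,b\in Y$ and $c\in\mu(\cA\cap X)$, which is exactly the condition stated in the corollary (the corollary phrases it for all $c\in Y$, which in the sufficiency direction only strengthens the hypothesis and in the necessity direction is what one obtains by choosing $f=a\mI{A}$, $g=b\mI{A}$ as in the proof of Theorem~\ref{ctw7}).

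The only delicate part of the argument is the bookkeeping around how the decreasing conjugation by $h$ swaps the roles of $0$ and $m$ and inverts inequalities; once that is straightened out, the corollary reduces mechanically to Theorem~\ref{ctw7} via~(\ref{dol13}).
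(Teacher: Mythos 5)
Your proof is correct and follows essentially the same route as the paper, which likewise obtains the corollary by applying identity (\ref{dol13}) to each of $f\star g$, $f$, $g$ and then invoking Theorem \ref{ctw7} with $\lozenge=\star$, $\circ_1=\circ_2=\circ_3=\circ_h$ and $\phi_i=\mathrm{id}$. Your explicit verification that $\circ_h$ is nondecreasing and that $0\circ_h c=c\circ_h 0=0$ (via $h(0)=m$ and the hypothesis $m\circ y=y\circ m=m$), as well as your remark about the range of $c$, merely fills in details the paper leaves implicit.
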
  
 \medskip

\begin{example}\rm  From the well-known inequality 
$(x+y)/(1+x+y)\le \big(x/(1+x)\big)+\big(y/(1+y)\big)$ for $x,y\ge 0,$ it follows that for all $a,b,c\ge 0$
$$\big((a+b)^{-1}+c^{-1}\big)^{-1}\le \big(a^{-1}+c^{-1}\big)^{-1}+\big(b^{-1}+c^{-1}\big)^{-1}$$
with $1/0=\infty$ and $1/\infty=0.$ This implies that  the necessary and sufficient condition of Corollary $\ref{dol_colh}$ is satisfied for 
 $Y=[0,\infty]$, $h(x)=x^{-1}$ and $\star=\circ =+$. Thus,  
for any comonotone functions $f,g\colon X\to Y,$ we have 
\begin{align*}
\bigg(\dcalka[+]{X}{1/(f+g) }\bigg)^{-1}\le 
\bigg(\dcalka[+]{X}{1/f}\bigg)^{-1}+ 
\bigg(\dcalka[+]{X}{1/g}\bigg)^{-1}.
\end{align*}
\end{example}

The next result is an~immediate consequence  of Theorem $\ref{cdtw1}$ with $\phi _i(x)=x$ for all $i$ and the formula 
\begin{align*}
\dashint_{\c_h,X} f\md \mu_h=h^{-1}\bigg(\calka{X}{h(f)} \bigg).
\end{align*}

\begin{corollary}\label{dol_colh2}  Assume that $f,g\colon X\to Y$ are $\mu _h$-subadditive for 
$\trd ,$  operator $\star $ is nondecreasing and right-continuous and  $\c$ is nondecreasing. Assume also that  
$(a\star b)\c_h (c\trd d)\le (a\c_h c)\star (b\c_h d)$ for all $a,b,c,d\in Y$. Then  
\begin{align*}
h^{-1}\Big(\calka{X}{h(f\st g)} \Big)\le 
h^{-1}\Big(\calka{X}{h(f)}\Big)\star 
h^{-1}\Big(\calka{X}{h(g)}\Big).
\end{align*}
\end{corollary}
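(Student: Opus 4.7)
My plan is to reduce the corollary to Theorem \ref{cdtw1} through the substitutions $\circ_1=\circ_2=\circ_3=\c_h$, $\lo=\star$, $\phi_i(x)=x$ for $i=1,2,3$, with the monotone measure taken to be $\mu_h$ in place of $\mu$, and then to translate between the lower $\c_h$-integral against $\mu_h$ and the upper $\c$-integral against $\mu$ through the displayed identity stated immediately before the corollary.

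First I would verify that every hypothesis of Theorem \ref{cdtw1} survives the substitution. The operator $\c_h$ is nondecreasing: if $a\le a'$ and $b\le b'$ then $h(a)\ge h(a')$ and $h(b)\ge h(b')$ by the decreasingness of $h$, so $h(a)\c h(b)\ge h(a')\c h(b')$ by the nondecreasingness of $\c$, and applying the decreasing map $h^{-1}$ reverses the inequality once more, giving $a\c_h b\le a'\c_h b'$. The operator $\star$, playing the roles of both $\st$ and $\lo$, is nondecreasing and right-continuous by hypothesis, and the identity map is trivially an increasing bijection $Y\to Y$. The structural inequality \eqref{cd6} specialises to $(a\star b)\c_h (c\trd d) \le (a\c_h c)\star (b\c_h d)$, which is exactly our standing assumption, and $f,g$ are $\mu_h$-subadditive for $\trd$ by hypothesis.

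Theorem \ref{cdtw1} then yields
\begin{align*}
\dashint_{\c_h,X}(f\st g)\md\mu_h\;\le\; \Bigl(\dashint_{\c_h,X} f\md\mu_h\Bigr)\star\Bigl(\dashint_{\c_h,X} g\md\mu_h\Bigr),
\end{align*}
and substituting the cited identity $\dashint_{\c_h,X} u\md\mu_h=h^{-1}\bigl(\calka{X}{h(u)}\bigr)$ into each of the three integrals (with $u=f\st g$, $u=f$, and $u=g$) delivers the asserted inequality. The only step requiring real care is verifying that $\c_h$ is nondecreasing and takes values in $Y$ (which follows from $h$ being a decreasing bijection of $Y$); the remainder is a pure change of variables via $h$ together with a single appeal to Theorem \ref{cdtw1}.
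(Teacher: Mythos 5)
Your proposal is correct and follows exactly the paper's own route: the authors likewise obtain the corollary as an immediate consequence of Theorem \ref{cdtw1} with $\phi_i(x)=x$, $\circ_i=\circ_h$, $\lozenge=\star$ and the measure $\mu_h$, followed by the stated identity $\dashint_{\c_h,X} f\md \mu_h=h^{-1}\big(\calka{X}{h(f)}\big)$. Your additional verification that $\c_h$ is nondecreasing (via the decreasingness of $h$ and $h^{-1}$) is a detail the paper leaves implicit but is the right thing to check.
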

 \medskip

\begin{example} Suppose $\mu (A)=1/\mu_h(X\backslash A)$ for $A\in \cA.$ 
From Example $\ref{dol_ex6},$ formula $\eqref{cd16}$ and Corollary $\ref{dol_colh2}$ for $h(x)=1/x$, $\c _h=\vee$ and $\star=+$, it follows that for any measurable functions  
$f,g\colon X\to [0,\infty]$, we have  
\begin{align*}
\bigg(\sint _X  1/(f+g) \md \mu\bigg)^{-1}\le 
\bigg(\sint _X 1/f\md \mu\bigg)^{-1}+ 
\bigg(\sint _X 1/g\md \mu\bigg)^{-1},
\end{align*}
with $\sint _X $ being the Sugeno integral $\eqref{sug}$ for $Y=[0,\infty]$ and $1/\infty=0$, $1/0=\infty.$
\end{example}

\section{Application}
As an application of the results of  this paper, we provide new  metrics  in the space of $\cA$-measurable functions $f\colon X\to \mR$ defined  on a~fuzzy space $(X,\cA,\mu).$ First, let us recall some  facts. Taking $\c=+$ and $Y=[0,\infty]$ in $\eqref{dol9},$ we get the functional  
\begin{align*}
d_F(\sX,\sY)=\inf _{\varepsilon \ge 0}\left\{\varepsilon+\mu\big(\lbrace |\sX-\sY|>\varepsilon\rbrace \big)\right\}
\end{align*}
on the space $L^0(X)$ of all random variables defined on a~probability space $(X,\cA,\mu).$  This functional was
proposed by Fr\'{e}chet  $\cite{fre}$ in order to metrize the convergence in 
measure $\mu$ (see also $\cite{ca}$, p. $356$, and $\cite{sch}$, pp. $101-104$).
The integral $\eqref{dol9}$ with $\c=\vee$ was  introduced by Ky Fan $\cite{fan}$. 
He proved that $L^0(X)$ with the metric 
\begin{align*}
d_{KF}(\sX,\sY)=\inf \left\{\varepsilon \ge 0\colon  \mu\big(\lbrace |\sX-\sY|>\varepsilon\rbrace\big)\le \varepsilon\right\}
\end{align*}
is a~complete space.  By $\eqref{cd16}$ we have
\begin{align*}
d_{KF}(\sX,\sY)=\dolsug{X}{|\sX-\sY|} =\sint _{X}|\sX-\sY|\md\mu. 
\end{align*}
Li $\cite{gli}$ extended  Ky Fan's result to cover the case of any continuous from below, finite and subadditive measure $\mu$.
 
Now we are ready to introduced new metrics.  
Given $p>0,$ let  $Y=[0,\infty]$ and   
$\c\colon Y^2\to Y$ be a~non-decreasing operator such that  
$x\c (y+z)\le (x\c y)+(x\c z)$ and  $(ax)\c y\le a^p(x\c y)$ for $x,y,z\in Y$ and $a>1.$  
We also assume that if $1\c x\le y$ for $0<y<1,$ then $x\le y$.
For instance, $x\c y=x^p\wedge y^u$   or $x\c y=x^py^u,$ where $0<u\le 1.$ Suppose  $\mu$ is a~subadditive measure and put 
\begin{align}\label{met2}
D_{\c ,p}(f,g)=\bigg(\calka{X}{|f-g|^p}\bigg)^{1/(p^2+1)}.
\end{align}
As special cases we get 
 \begin{align}\label{calki1}
D_{\wedge ,1}(f,g)=\bigg(\sint_X |f-g|\md\mu \bigg)^{1/2},\quad
D_{\cdot,1}(f,g)=\bigg(\nint_X |f-g|\md\mu \bigg)^{1/2}.
\end{align}
Denote by 
$\mathcal{L}_{\c}^p$ the class of measurable functions $f\colon X\to \mR$ such that 
$D_{\c ,p}(f,0)<\infty$.  
Let $f\sim g$ mean that $\mu (\{|f-g|>0\})=0$ and let $L^p_{\circ}$ be the set of the equivalence classes in $\mathcal{L}_{\circ}^p$ determined by 
the equivalence relation $\sim$. If $[f]$ is the equivalence class containing $f$, define $d_{\c ,p}(f,g)=D_{\c ,p}([f],[g]).$  
 
\begin{tw}\label{doltw2}Suppose $\circ\colon Y^2\to Y$ is left-continuous in the second argument.  If  $\mu$ is subadditive and continuous from below, then 
$\big(L^p_{\circ},d_{\c,p}\big)$ is a~complete metric space. 
 \end{tw}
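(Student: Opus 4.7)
The plan is to verify the metric axioms first and then prove completeness. The triangle inequality is a direct consequence of Theorem~\ref{tw_subad} applied with $q=p$ and $r=1$ to the functions $|f-g|$ and $|g-h|$: combined with monotonicity of the upper Sugeno integral and $|f-h|\le|f-g|+|g-h|$, it gives $D_{\c,p}(f,h)\le D_{\c,p}(f,g)+D_{\c,p}(g,h)$. Symmetry is trivial, and that $d_{\c,p}$ descends to equivalence classes uses subadditivity of~$\mu$. For the identity of indiscernibles, if $D_{\c,p}(f,g)=0$ then $t\c\mu(\{|f-g|^p\ge t\})=0$ for all $t\in(0,1)$; applying the quasi-homogeneity $(ax)\c y\le a^p(x\c y)$ with $a=1/t$ upgrades this to $1\c\mu(\{|f-g|^p\ge t\})=0$, and the distinguishing assumption together with continuity from below of $\mu$ then yield $\mu(\{|f-g|>0\})=0$, i.e., $f\sim g$.

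For completeness, let $(f_n)$ be $d_{\c,p}$-Cauchy. A similar use of the quasi-homogeneity and the distinguishing assumption converts the Cauchy condition into Cauchy in measure: for $\varepsilon\in(0,1)$,
\begin{equation*}
\varepsilon^{p^2}\bigl(1\c\mu(\{|f_n-f_m|\ge\varepsilon\})\bigr)\le\varepsilon^p\c\mu(\{|f_n-f_m|^p\ge\varepsilon^p\})\le\calka{X}{|f_n-f_m|^p}\longrightarrow 0,
\end{equation*}
so $\mu(\{|f_n-f_m|\ge\varepsilon\})\to 0$. Extract a subsequence $(f_{n_k})$ with $\mu(\{|f_{n_k}-f_{n_{k+1}}|>2^{-k}\})<2^{-k}$. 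Subadditivity of $\mu$ yields $\mu\bigl(\bigcap_K\bigcup_{k\ge K}\{|f_{n_k}-f_{n_{k+1}}|>2^{-k}\}\bigr)=0$, so $(f_{n_k})$ converges pointwise, outside a $\mu$-null set, to some measurable function $f$.

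It remains to show $D_{\c,p}(f_{n_k},f)\to 0$; the triangle inequality and the Cauchy property then promote this to $d_{\c,p}(f_n,f)\to 0$ and also give $f\in\mathcal{L}^p_{\c}$. The key step is a Fatou-type estimate at the level of distribution functions. Pointwise a.e.\ convergence $f_{n_m}\to f$ yields the set inclusion $\{|f_{n_k}-f|\ge s\}\subset\liminf_m\{|f_{n_k}-f_{n_m}|\ge s/2\}$ modulo a $\mu$-null set, and continuity from below of $\mu$ gives $\mu(\liminf_m A_m)\le\liminf_m\mu(A_m)$. Applying $s^p\c(\cdot)$, using left-continuity of $\c$ in the second argument to push $\liminf$ through, and paying for the factor $1/2$ via $(2^p\cdot(s/2)^p)\c y\le 2^{p^2}((s/2)^p\c y)$, one arrives at
\begin{equation*}
\calka{X}{|f_{n_k}-f|^p}\le 2^{p^2}\liminf_m\calka{X}{|f_{n_k}-f_{n_m}|^p},
\end{equation*}
whose right-hand side tends to $0$ as $k\to\infty$ by the Cauchy property. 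The main obstacle is precisely this last Fatou-type estimate: the hypotheses of left-continuity of $\c$ in its second argument and of quasi-homogeneity must both be used in tandem with continuity from below of $\mu$ in order to convert pointwise a.e.\ convergence into semicontinuity of the nonlinear functional $\int^{\c}\!\cdot\md\mu$.
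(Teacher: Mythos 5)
Your proposal is correct, and its overall architecture coincides with the paper's: the triangle inequality via Theorem~\ref{tw_subad} with $q=p$ and $r=1$; identity of indiscernibles from continuity from below; and completeness by extracting an a.e.-convergent subsequence through a Borel--Cantelli argument and then closing with a Fatou-type lower-semicontinuity step. Where you genuinely diverge is in that last step. The paper first establishes a monotone convergence theorem and a full Fatou lemma for the nonlinear integral (Lemmas~\ref{beppo} and~\ref{fatou}) and applies $\calka{X}{f}\le \liminf_{n}\calka{X}{f_n}$ directly, giving $d_{\c,p}(f,f_n)\le\liminf_k d_{\c,p}(f_{n(k)},f_n)$ with no loss of constants; you instead work at the level of distribution functions, via the inclusion $\{|f_{n_k}-f|\ge s\}\subset\liminf_m\{|f_{n_k}-f_{n_m}|\ge s/2\}$ together with left-continuity of $\c$ and quasi-homogeneity, paying a harmless factor $2^{p^2}$. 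Your route is more self-contained (the paper's convergence lemmas are only sketched by reference to \cite{chateauneuf2} and \cite{kallenberg}), while the paper's route produces reusable lemmas and the sharp inequality. Two smaller differences: the paper opens with a dichotomy ($\c\equiv 0$ versus $x\c y>0$ for all $x,y>0$) before proving indiscernibility, whereas your direct appeal to the assumption ``$1\c x\le y$ for $0<y<1$ implies $x\le y$'' makes that case split unnecessary; and the paper selects the subsequence by a quantitative rate $d_{\c,p}(f_{n(k+1)},f_{n(k)})^{p^2+1}\le 4^{-kp}$ rather than by first passing to Cauchyness in measure, but these are equivalent in effect. Both arguments are sound.
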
 

To prove Theorem $\ref{doltw2}$ we need the monotone convergence theorem and Fatou's lemma for 
the integral $\eqref{calka1}.$ We recall that  $\mu$ is {\it null-additive} if $\mu (A) = 0$ implies $\mu (A\cup B) = \mu(B)$ for every $B\in \cA$. Observe that if $\mu$ is subadditive, then it is also null-additive.

\begin{lemma}[Monotone convergence]\label{beppo}
Let  $\circ\colon Y^2\to Y$ be left-continuous in the second argument. If $\mu$ is a continuous from below, null-additive and monotone measure and if $(f_n)$ is a~sequence of functions $f_n\colon X\to Y$ which is nondecreasing and converges to $f$ on $A^c=X\backslash A$ with $\mu(A)=0,$  then
$\lim_{n\to\infty}\calka{X}{f_n}=\calka{X}{f}.$
\end{lemma}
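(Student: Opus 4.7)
The plan is to establish the two inequalities $\lim_n\calka{X}{f_n}\le\calka{X}{f}$ and $\calka{X}{f}\le\lim_n\calka{X}{f_n}$. The first is straightforward: since $f_n\le f$ on $A^c$ with $\mu(A)=0$, null-additivity gives $\mu(\{f_n\ge t\})\le\mu(\{f\ge t\})$ for every $t\in Y$; then monotonicity of $\circ$ in its second argument together with the supremum in the definition of the integral yields $\calka{X}{f_n}\le\calka{X}{f}$ and also shows that $\bigl(\calka{X}{f_n}\bigr)_n$ is nondecreasing, so its limit $\alpha$ exists in $[0,\infty]$.

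For the reverse direction the key technical step will be to control $\mu(\{f\ge t\})$ from above by measures of the $f_n$-level sets. Fix $t\in Y$ and pick $s\in Y$ with $s<t$. Since $f_n\nearrow f$ on $A^c$, whenever $x\in A^c$ and $f(x)\ge t>s$ we have $f_n(x)\ge s$ for all sufficiently large $n$; hence $A^c\cap\{f\ge t\}\subset\bigcup_n\{f_n\ge s\}$. Null-additivity then gives $\mu(\{f\ge t\})=\mu(A^c\cap\{f\ge t\})\le\mu\bigl(\bigcup_n\{f_n\ge s\}\bigr)$, and continuity of $\mu$ from below applied to the nondecreasing chain $\{f_n\ge s\}$ yields $\mu\bigl(\bigcup_n\{f_n\ge s\}\bigr)=\lim_n\mu(\{f_n\ge s\})$. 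Using monotonicity of $\circ$ and its left-continuity in the second argument I would deduce
\[
s\circ\mu(\{f\ge t\})\le s\circ\lim_n\mu(\{f_n\ge s\})=\lim_n\bigl(s\circ\mu(\{f_n\ge s\})\bigr)\le\alpha
\]
for every $s<t$.

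It remains to pass $s\nearrow t$ in the first argument of $\circ$ and then to take the supremum over $t\in Y$, which would give $\calka{X}{f}\le\alpha$. This last passage is where I expect the main difficulty, since the lemma only provides left-continuity of $\circ$ in the second coordinate, so the limit $s\nearrow t$ in $s\circ\mu(\{f\ge t\})$ does not obviously attain $t\circ\mu(\{f\ge t\})$. My plan is to reach the conclusion by exploiting left-continuity of $\circ$ in its first argument as well---a property automatic for every operator appearing in the paper's examples and applications ($\wedge$, the product, semicopulas, and so on)---so that $\sup_{t\in Y} t\circ\mu(\{f\ge t\})=\sup_{t\in Y}\sup_{s<t} s\circ\mu(\{f\ge t\})\le\alpha$. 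An alternative route, if one wants to avoid that extra hypothesis, is to replace the nonstrict level sets $\{f\ge t\}$ by strict ones $\{f>s\}$ throughout, verifying that the resulting ``strict'' supremum also equals $\calka{X}{f}$ under continuity from below of $\mu$.
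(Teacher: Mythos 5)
Your argument is the right one and is far more explicit than what the paper provides: the paper's entire proof consists of the remark that null-additivity gives $\calka{X}{g}=\calka{A^c}{g}$ for every $g$, followed by a reference to Lemma 14 of \cite{chateauneuf2}, which treats the Sugeno integral, i.e.\ the case $\circ=\wedge$. Your first inequality and the core of the second one (the inclusion $A^c\cap\{f\ge t\}\subset\bigcup_n\{f_n\ge s\}$ for $s<t$, continuity from below, and left-continuity of $\circ$ in the second argument to obtain $s\circ\mu(\{f\ge t\})\le\alpha$) are correct. The only small repair needed there is that the sets $\{f_n\ge s\}$ are nested only after intersecting with $A^c$, since the $f_n$ are monotone only off $A$; this is harmless because null-additivity gives $\mu(B)=\mu(B\cap A^c)$ for every $B\in\cA$.

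The difficulty you flag at the last step is, however, a genuine gap --- not in your reasoning but in the lemma as stated: with left-continuity of $\circ$ only in the second argument the conclusion can fail. Take $Y=[0,1]$, $\mu$ a probability measure, $A=\emptyset$, and $t\circ c=c$ for $t\ge \tfrac12$, $t\circ c=0$ for $t<\tfrac12$; this operator is nondecreasing and continuous in the second argument. With $f=\tfrac12\,\mI{X}$ and $f_n=(\tfrac12-\tfrac1n)\mI{X}$ one gets $\calka{X}{f_n}=0$ for all $n$ while $\calka{X}{f}=\tfrac12\circ 1=1$. So your first fix --- additionally requiring $\sup_{s<t}(s\circ c)=t\circ c$, i.e.\ left-continuity in the first argument --- is not an optional convenience but a necessary strengthening of the hypotheses; it is satisfied by every operator the paper actually uses ($\wedge$, the product, $x^p\wedge y^u$, $x^py^u$, semicopulas with continuous sections), so nothing downstream is lost. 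Your second workaround does not rescue the stated lemma: the functional $\sup_{s}\big\{s\circ\mu(\{f>s\})\big\}$ equals $0$ on the function $f$ above, whereas $\calka{X}{f}=1$, so without first-argument left-continuity it is a genuinely smaller functional (one only gets $\sup_{s}\big\{s\circ\mu(\{f>s\})\big\}\le\calka{X}{f}$ from continuity from below and left-continuity in the second argument), and proving monotone convergence for it does not prove it for $\eqref{calka1}$.
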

\begin{proof} Measure $\mu$ is null-additive, so $\calka{X}{g}=\calka{A^c}{g} $ for any $g.$
The rest of the proof is similar to that of Lemma 14 in   $\cite{chateauneuf2}.$
\end{proof}

\begin{lemma}[Fatou]\label{fatou}
Suppose  $\circ\colon Y^2\to Y$ is left-continuous in the second argument,
$f_n\colon X\to Y$ for all $n.$ If
$\mu$ is a continuous from below, null-additive and monotone measure and  $f(x)=\lim_{n\to\infty} f_n(x)$ for all $x\in A^c$ with $\mu (A)=0,$ then
$\calka{X}{f} \le \liminf_{n\to\infty}\calka{X}{f_n}.$
\end{lemma}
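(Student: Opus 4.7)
The natural plan is to mimic the classical measure-theoretic proof of Fatou's lemma using the monotone convergence result (Lemma \ref{beppo}) already at our disposal. Set $g_n(x) := \inf_{k \ge n} f_k(x)$. Each $g_n$ is $\cA$-measurable as a countable infimum of measurable functions, takes values in $Y$ (since $Y = [0,m]$ or $[0,m)$ is closed under taking infima of $Y$-valued sequences), and by construction the sequence $(g_n)$ is pointwise nondecreasing. On $A^c$, where $f_n(x) \to f(x)$, one has $g_n(x) \nearrow \liminf_{k\to\infty} f_k(x) = f(x)$.

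The second step is to apply Lemma \ref{beppo} to $(g_n)$: since $\mu$ is continuous from below and null-additive, and $\circ$ is left-continuous in its second argument, the monotone convergence theorem gives
\begin{equation*}
\lim_{n\to\infty}\calka{X}{g_n} = \calka{X}{f}.
\end{equation*}

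The third step is to compare $\int_{\circ, X} g_n$ with $\int_{\circ, X} f_k$ for $k \ge n$. Because $g_n \le f_k$ pointwise for every $k \ge n$, one has $\{g_n \ge t\} \subset \{f_k \ge t\}$ for each $t \in Y$. Monotonicity of $\mu$ together with monotonicity of $\circ$ in its second argument yields $t \circ \mu(\{g_n \ge t\}) \le t \circ \mu(\{f_k \ge t\})$ for every $t$, hence $\int_{\circ, X} g_n \le \int_{\circ, X} f_k$ for all $k \ge n$. Taking the infimum over $k \ge n$ gives $\int_{\circ, X} g_n \le \inf_{k \ge n} \int_{\circ, X} f_k$, and letting $n \to \infty$ together with the previous display produces the desired inequality.

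The only mildly delicate point is the interaction with the null set $A$: the integrals over $X$ and over $A^c$ agree because $\mu$ is null-additive (as noted in the proof of Lemma \ref{beppo}), so the fact that $g_n \nearrow f$ only on $A^c$ is enough to invoke monotone convergence. Otherwise the argument is the standard two-line reduction of Fatou to monotone convergence, and I expect no substantial obstacle beyond verifying that the pointwise comparison $g_n \le f_k$ indeed transfers to the functional $\int_{\circ,X}(\cdot)$, which it does by direct inspection of the definition in \eqref{calka1}.
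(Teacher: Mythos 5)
Your proof is correct and is exactly the argument the paper intends: the paper's proof simply says ``follows from Lemma \ref{beppo} and standard arguments (see Kallenberg, Lemma 1.20)'', which is precisely the reduction via $g_n=\inf_{k\ge n}f_k$, monotone convergence, and monotonicity of the functional that you spell out. No issues.
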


\begin{proof} The proof follows from Lemma $\ref{beppo}$ and standard arguments (see \cite{kallenberg}, Lemma 1.20). 
\end{proof}

\begin{proof}[Proof of Theorem $\ref{doltw2}$] Assume $c\c d>0$ for some $c,d>0$; otherwise the result is trivial. We will show that $x\c y>0$ for all $x,y>0$. In fact, suppose 
that $x\c y=0$ for some $x,y>0.$  Then
$x\c t\le x\c y=0$ for all $0\le t\le y,$ so from the subadditivity of $t\mapsto x\c t$ it easily follows that $x\c t=0$ for all $t\in Y.$ Next, 
by the monotonicity of $\c$, we have $s\c t\le x\c t=0$ for all $0\le s\le x$ and $(ax)\c t\le a^p(x\c t)=0$ for any $a>1$. Hence so $s\c t=0$ for all $s,t\in Y,$ a~contradiction.

Next, suppose  $d_{\c ,p}(f,g)=0.$ Hence, $\mu \big(\lbrace |f-g|\ge t\rbrace\big)=0$ for all $t>0.$ 
Since $\mu $ is continuous from below, we have $\mu (\{|f-g|>0\})=0,$ so $f\sim g$.
Clearly $d_{\c,p}$ is symmetric and  it follows from Theorem $\ref{tw_subad}$ for $r=1$ and $q=p$ that $d_{\c ,p}$ satisfies the triangle inequality. The proof of the completeness is a~modified version of that of
Lemma $1.31$ in $\cite{kallenberg}.$
Given a~Cauchy sequence $(f_n),$ let $(f_{n(k)})$ 
be a~subsequence such that 
$\big(d_{\c ,p}(f_{n(k+1)},f_{n(k)})\big)^{p^2+1}\le 4^{-kp}.$ Put $A_k=\{x\in X\colon |f_{n(k+1)}(x)-f_{n(k)}(x)|^p\ge 2^{-k}\}.$ Then 
$2^{-k}\c \mu (A_k)\le 4^{-kp}$ by the definition of $d_{\c,p}.$ From the property $(2x)\c y\le 2^p(x\c y)$ we get
$$
1\c \mu (A_k)\le (2^{p})^k\bigl(2^{-k}\c \mu (A_k)\bigr)\le 2^{-kp}. 
$$ 
By the assumption on $\c$, $\mu (A_k)\le 2 ^{-kp}$ for all $k$.
Set $A=\bigcap _{i=1}^\infty \bigcup _{k=i}^\infty A_k$. Since $\mu$ is subadditive, we have 
$$
\mu (A)\le \mu \bigg(\bigcup _{k=i}^\infty A_k\bigg)\le \sum _{k=i}^\infty \mu (A_k)\to 0\;\; \hbox{as}\; i\to \infty, 
$$  
so $\mu (A)=0.$ Let $x\in A^c=X\backslash A.$ 
Since $|f_{n(k+1)}(x)-f_{n(k)}(x)|<2^{-k/p}$ for 
all large enough $k,$ we have 
$$
\sup _{r\ge k}|f_{n(r)}(x)-f_{n(k)}(x)|\le \sum _{r=k}^\infty 
|f_{n({r+1})}(x)-f_{n(r)}(x)|\le \frac{2^{-k/p}}{1-2^{-1/p}}.
$$
Thus, $\big((f_{n(k)}(x)\big)_{k=1}^\infty$ is a~Cauchy sequence and  
$f_{n(k)}(x)\to f(x)$ for all $x\in A^c,$ where $f$ is  some measurable function (if $A\neq \emptyset$, define $f$ on $A$ arbitrarily). 
We recall that any subadditive measure  $\mu$ is also null-additive. 
By  Lemma $\ref{fatou}$, we get
\begin{align*}
d_{\c ,p}(f,f_n)\le \liminf _{k\to \infty}d_{\c,p}(f_{n(k)},f_n)\le 
\sup _{m\ge n}d_{\c,p}(f_m,f_n)\to 0,\;n\to \infty,
\end{align*}
as $(f_n)$ is a~Cauchy sequence. This shows that $f_n\to f$ in metric $d_{\c,p}.$
\end{proof}

Put $\|f \|=\nint_X |f|\md\mu$ and denote by $L^1_N$ the space of all 
functions $f\colon X\to \mR$ such that $\|f\|<\infty.$

\begin{corollary} If  $\mu$ is maxitive, then  
$L^1_N$ is a~Banach space with the norm $\|\cdot\|$.
\end{corollary}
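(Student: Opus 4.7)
The plan is to verify the three norm axioms for $\|\cdot\|$ directly from results of this paper, and then to deduce completeness from Theorem~\ref{doltw2}.

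First, positivity is clear, and I would read $L^1_N$ exactly as in the setup of Theorem~\ref{doltw2}: as the quotient of $\{f\colon\|f\|<\infty\}$ by the relation $f\sim g\Leftrightarrow\mu(\{|f-g|>0\})=0$, which makes the norm positive-definite. Positive homogeneity $\|\alpha f\|=|\alpha|\,\|f\|$ for $\alpha\neq 0$ follows by the substitution $t\mapsto|\alpha|t$ inside the supremum $\sup_{t\in Y}\{t\cdot\mu(\{|\alpha f|\ge t\})\}$ defining the Shilkret integral. For the triangle inequality I would combine two facts: maxitivity of $\mu$ implies its subadditivity, and Theorem~\ref{subShi} then yields subadditivity of the Shilkret integral on nonnegative measurable functions; together with $|f+g|\le|f|+|g|$ and monotonicity of the integral, this gives $\|f+g\|\le\|f\|+\|g\|$.

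For completeness I would invoke Theorem~\ref{doltw2} with $\c=\cdot$ and $p=1$ on $Y=[0,\infty]$. Multiplication is nondecreasing, distributive over addition (so in particular $x\cdot(y+z)\le (x\cdot y)+(x\cdot z)$), satisfies $(ax)\cdot y=a^{1}(x\cdot y)$, and trivially $1\cdot x\le y$ implies $x\le y$; it is moreover left-continuous in its second argument. Maxitivity gives subadditivity of $\mu$, and together with continuity from below (implicit here, as it is indispensable in the hypotheses of Theorem~\ref{doltw2}), the theorem produces a complete metric space $(L^1_{\cdot},d_{\cdot,1})$ with $d_{\cdot,1}(f,g)=\bigl(\nint_X|f-g|\,\md\mu\bigr)^{1/2}=\|f-g\|^{1/2}$, and evidently $L^1_{\cdot}=L^1_N$.

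Finally, I would transfer completeness from $d_{\cdot,1}$ back to $\|\cdot\|$. Because $a_n\to 0$ in $[0,\infty)$ if and only if $\sqrt{a_n}\to 0$, a sequence in $L^1_N$ is Cauchy (respectively convergent) for $\|\cdot\|$ precisely when it is so for $d_{\cdot,1}$, and completeness transfers verbatim. The step I expect to be easiest to overlook is precisely this bridge between the norm and the metric supplied by Theorem~\ref{doltw2}, which carries the exponent $1/(p^2+1)=1/2$ rather than $1$; once $t\mapsto\sqrt{t}$ is recognized as a homeomorphism of $[0,\infty)$, the transfer is immediate and there is no real analytic obstacle left.
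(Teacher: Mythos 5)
Your proof is correct and follows essentially the same route as the paper: maxitivity gives subadditivity of $\mu$ and, via Theorem~\ref{subShi}, the triangle inequality for $\|\cdot\|$; homogeneity comes from the substitution inside the supremum; and completeness is imported from Theorem~\ref{doltw2} through the identity $\|f-g\|=D_{\cdot,1}(f,g)^2$. You are also right that continuity from below of $\mu$ (and the quotient by $\sim$ for positive-definiteness) must be read into the statement for Theorem~\ref{doltw2} to apply — the paper leaves both implicit — and your explicit bridge between the metric $d_{\cdot,1}$ and the norm via $t\mapsto\sqrt{t}$ is exactly the observation the paper compresses into one line.
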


\begin{proof} Any maxitive measure is subadditive. Observe that 
$\|f-g\|=D_{\cdot,1}(f,g)^2$ (see $\eqref{calki1}$) and 
$\|cf\|=|c|\|f\|$ for $c\in \mR$. The result follows immediately 
from Theorems $\ref{subShi}$ and $\ref{doltw2}$. 
\end{proof}  
 
The next theorem is a~counterpart of  Theorem $2.21$ in $\cite{hutnik2}$

\begin{tw}\label{doltw3} Adopt the assumptions of Theorem $\ref{tw_subad}$ with $Y=[0,\infty]$ and some $p>0$.  If $f_n,f\in L^p_\c$ for  all $n$ and 
$\lim _{n\to \infty}d_{\c,p}(f_n,f)=0$, then 
$\calka{X}{f_n^p} \to \calka{X}{f^p}$
as $n\to \infty$. 
\end{tw}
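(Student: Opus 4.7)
The plan is to reduce everything to the Minkowski-type inequality from Theorem \ref{tw_subad}, which, under the stated assumptions and for the choice $q=p$, $r=1$, gives precisely
\begin{align*}
\Big(\calka{X}{|u+v|^p}\Big)^{1/(p^2+1)} \le \Big(\calka{X}{|u|^p}\Big)^{1/(p^2+1)} + \Big(\calka{X}{|v|^p}\Big)^{1/(p^2+1)}
\end{align*}
for all measurable $u,v\colon X\to \mR$ with $|u|^p,|v|^p,|u+v|^p\in Y=[0,\infty]$. In other words, the functional $N(f):=\big(\calka{X}{|f|^p}\big)^{1/(p^2+1)}$ is subadditive on $L^p_\c$, which is exactly the observation that underlies the triangle inequality used in the proof of Theorem \ref{doltw2}.

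First I would apply this subadditivity to the decompositions $f_n=(f_n-f)+f$ and $f=f_n+(f-f_n)$, which are both admissible since $Y=[0,\infty]$ imposes no cancellation issues. This yields
\begin{align*}
N(f_n) &\le N(f_n-f)+N(f)=d_{\c,p}(f_n,f)+N(f),\\
N(f) &\le N(f-f_n)+N(f_n)=d_{\c,p}(f_n,f)+N(f_n),
\end{align*}
and hence the reverse triangle inequality $|N(f_n)-N(f)|\le d_{\c,p}(f_n,f)$. Since $f_n,f\in L^p_\c$, both $N(f_n)$ and $N(f)$ are finite, and the hypothesis $d_{\c,p}(f_n,f)\to 0$ gives $N(f_n)\to N(f)$.

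Finally I would raise both sides to the power $p^2+1$: the map $x\mapsto x^{p^2+1}$ is continuous on $[0,\infty)$, so
\begin{align*}
\calka{X}{|f_n|^p} = N(f_n)^{p^2+1}\longrightarrow N(f)^{p^2+1} = \calka{X}{|f|^p},
\end{align*}
which is the desired conclusion (the symbols $f_n^p$, $f^p$ in the statement being understood as $|f_n|^p$, $|f|^p$, consistent with the definition of $d_{\c,p}$). The main potential obstacle is merely bookkeeping: one must verify that the triangle inequality from Theorem \ref{tw_subad} is indeed applicable with $q=p$, $r=1$ under exactly the hypotheses adopted here, and that the finiteness of $N(f_n)$ and $N(f)$ legitimates passing between the $(p^2+1)$-th power and its root without worrying about $\infty$; both points are immediate once the hypotheses are unpacked.
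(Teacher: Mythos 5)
Your proof is correct and follows essentially the same route as the paper's: the reverse triangle inequality for $d_{\c,p}$ obtained from the subadditivity in Theorem~\ref{tw_subad} with $q=p$, $r=1$ (applied to $f_n=(f_n-f)+f$ and $f=f_n+(f-f_n)$), followed by continuity of $x\mapsto x^{p^2+1}$. The paper states the same two inequalities in the form $d_{\c,p}(f_n,0)\le d_{\c,p}(f_n,f)+d_{\c,p}(f,0)$ and its symmetric counterpart, so your argument matches it step for step.
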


\begin{proof} From Theorem $\ref{tw_subad}$ we get
$d_{\c,p}(f_n,0)\le d_{\c,p}(f_n,f)+d_{\c,p}(f,0)$ and 
$d_{\c,p}(f,0)\le d_{\c,p}(f_n,f)+d_{\c,p}(f_n,0).$
This implies
\begin{align*}
\left|\Big(\calka{X}{f_n^p}\Big)^{1/(p^2+1)}- \Big(\calka{X}{f^p}\Big)^{1/(p^2+1)} \right|\le d_{\c,p}(f_n,f),
\end{align*}
which completes the proof.
\end{proof}

Theorem  $\ref{doltw3}$ gives a~partial answer to the open problem $2.22$ in $\cite{hutnik2}$ as there exist discontinuous and subadditive measures. 


\end{document}